\documentclass[a4paper,11 pt,twoside,reqno]{amsart}

\usepackage[utf8]{inputenc}
\usepackage[english]{babel}
\usepackage{times}
\usepackage{amsmath}
\usepackage{amsfonts}
\usepackage{amssymb}
\usepackage{amsmath}
\usepackage{amsthm}
\usepackage{graphicx}
\usepackage{array}
\usepackage{color}
\usepackage{mathrsfs}
\usepackage{hyperref}
\usepackage{esint} 
\usepackage{tikz}
\usepackage{ upgreek }
\usepackage{enumitem}

\definecolor{darkgreen}{rgb}{0.1,0.7,0.1}
\definecolor{darkred}{rgb}{0.7,0.1,0.1}

\newtheorem{theorem}{Theorem}
\newtheorem{lemma}{Lemma}[section]
\newtheorem{proposition}[lemma]{Proposition}

\newtheorem{conjecture}[lemma]{Conjecture}
\newtheorem{remark}[lemma]{Remark}

\newcommand{\eps}{\varepsilon}

\newcommand\symb[2][\bf]{{\mathchoice{\hbox{#1#2}}{\hbox{#1#2}}%
        {\hbox{\scriptsize#1#2}}{\hbox{\tiny#1#2}}}}

\def\R{{\symb R}}
\def\N{{\symb N}}

\def\P{{\symb P}}

\renewcommand{\P}{\mathbb{P}}
\newcommand{\E}{\mathbb{E}}

\newcommand{\cE}{\mathcal{E}}

\newcommand{\gl}{\lambda}

\hypersetup{
citecolor=blue,
colorlinks=true, 
breaklinks=true, 
urlcolor= blue, 
linkcolor= black, 
bookmarksopen=true, 
pdftitle={The spectrum of the stochastic Bessel operator at high temperature}, 
}

\begin{document}

\title{The spectrum of the stochastic Bessel operator at high temperature}

	\author{Laure Dumaz}
\address{CNRS \& Department of Mathematics and Applications, \'Ecole Normale Sup\'erieure (Paris), 45 rue d’Ulm, 75005 Paris, France}
\email{laure.dumaz@ens.fr}
\author{Hugo Magaldi}
\address{}
\email{magaldi.hugo@gmail.com}

\date{\today}

\maketitle

\begin{abstract}
	Ram\'irez and Rider \cite{RamirezRider2009} established that the hard edge of the spectrum of the $\beta$-Laguerre ensemble converges, in the high-dimensional limit, to the bottom of the spectrum of the stochastic Bessel operator. Using stochastic analysis tools, we prove that, in the high-temperature limit ($\beta \to 0$), the rescaled eigenvalue point process of this operator converges to a non-trivial limiting point process. This limit is characterized by a family of coupled diffusions and differs from a Poisson point process due to its interaction with the hard edge. Exploiting this diffusion characterization, we establish exact large deviation asymptotics for the largest eigenvalues. Furthermore, for an explicit range of the parameters, we relate this limiting process to the finite-$n$ $\beta$-Laguerre ensemble, conjecturing an exact distributional match with the infinite sum of its independent exponential gaps. As a byproduct of our analysis, we also formulate a conjecture regarding an explicit integral formula for the probability that a reflected Brownian motion with a constant drift hits an affine line, generalizing a formula of Salminen and Yor \cite{SalminenYor}.
\end{abstract}

\tableofcontents

\section{Introduction}

\subsection*{$(\beta,a)$-Laguerre ensembles.}

In this paper, we study the high-temperature limit of the spectrum of the Stochastic Bessel Operator (SBO). Its finite-dimensional counterpart is the famous $(\beta,a)$-Laguerre ensemble, a probability measure on the set of ordered points in $\R_+^n$ with density
\begin{align}\label{densityLaguerre}
	\frac{1}{Z_{n,\beta,a}} \prod_{i < j} |\gl^{(n)}_i - \gl^{(n)}_j|^\beta \times \prod_{k=1}^{n} (\gl^{(n)}_k)^{\frac{\beta}{2}(a+1) -1} e^{-\frac{\beta}{2} \gl^{(n)}_k} 1_{0 < \gl^{(n)}_1 < \cdots  <\gl^{(n)}_n }\,,
\end{align}
where $a > -1$, $\beta >0$ and $Z_{n,\beta,a}$ is the normalization constant.

For the specific values $\beta = \{1,2,4\}$ and integer $a \in \N$, this joint law describes the eigenvalue distribution of Wishart matrices. These matrices take the form $X X^\intercal$, where $X$ is a rectangular matrix of size $n \times (n+a)$ with independent real ($\beta = 1$), complex ($\beta = 2$), or quaternionic ($\beta = 4$) Gaussian entries of mean zero and variance $1$.  Note that the matrix $(n+ a)^{-1} X X^\intercal$ represents the sample covariance matrix of $n$ independent data vectors, each of them containing $n+a$ i.i.d. standard Gaussian entries.

\medskip

Dumitriu and Edelman \cite{DumitriuEdelman} constructed a tridiagonal random matrix model for general $\beta >0$, known as the $\beta$-Wishart ensemble, whose eigenvalues are distributed according to the density \eqref{densityLaguerre}. The matrix is written as $L_{\beta,a} L_{\beta,a}^\intercal$, where
\begin{align}
	L_{\beta,a} 
	= \frac{1}{\sqrt{\beta}} \begin{pmatrix} \chi_{(a+n)\beta} &  \chi_{(n-1)\beta} & & &  \\ 
	&	\chi_{(a+n-1)\beta} & \chi_{(n-2)\beta} & & &  \\
	&&&&&\\
	 &&  \ddots  & \ddots & \label{matrixLbetaa}\\ 
	 &&&&&\\
	 & & & \chi_{(a+2)\beta} & \chi_{\beta}  \\
	 &&&&  \chi_{(a+1)\beta}
	\end{pmatrix}\,
\end{align}
The entries $\chi_r$ appearing in the matrix are independent $\chi$-distributed random variables with the indicated parameter $r$. While the construction for the classical cases $\beta = 1,2,4$ relies on bidiagonalizing the initial Wishart matrices via Householder reflections, the resulting tridiagonal matrix naturally generalizes to all values of $
\beta >0$, providing a matrix model to study \eqref{densityLaguerre} for general $\beta$.

\medskip

Physically, the particles distributed according to the law \eqref{densityLaguerre} can be interpreted as a log-gas at inverse temperature $\beta$ confined to the half-line $\mathbb{R}_+$ by a log-Gamma potential \cite{forrester2010}. The $n$ particles are positively charged and repel each other. The interaction with the hard edge at the origin can be modeled by an additional particle fixed at $0$. The effective sign of this ghost charge depends on the parameters: when $a \geq 2/\beta - 1$, the hard edge is repulsive (positively charged), whereas when $a < 2/\beta -1$, the hard edge becomes attractive (negatively charged), causing particles to accumulate near the origin. To rigorously capture the microscopic behavior of these particles near the hard edge and decouple the system from finite-$n$ effects, we turn to the continuous limit.

\subsection*{The Stochastic Bessel Operator.} The simple tridiagonal structure of the matrix \eqref{matrixLbetaa} provides a powerful tool for deriving several important statistical properties. In particular, as noticed by Edelman and Sutton \cite{edelman_sutton2007}, it acts as a discrete differential operator, providing a way to take its limit at the level of self-adjoint differential operators. Ram\'irez and Rider implemented this approach for the matrix \eqref{matrixLbetaa}, and proved in \cite{RamirezRider2009} that the point process of the smallest rescaled eigenvalues $n \gl_i^{(n)}$ has a limit as $n \to \infty$. These rescaled eigenvalues converge towards the smallest eigenvalues of the stochastic Bessel operator (SBO).
For all $\beta >0$ and $a > -1$, the SBO is defined as
\begin{align*}
	\mathfrak{G}_{\beta,a} = - \exp \Big((a+1) x + \frac{2}{\sqrt{\beta}} b(x) \Big) \cdot \frac{d}{dx} \Big(\exp\Big(- a x - \frac{2}{\sqrt{\beta}} b(x)\Big) \frac{d}{dx}\Big)\,,
\end{align*}
where $b$ is a standard Brownian motion. It formally corresponds to the differential operator
\begin{align*}
	e^x \Big(- \frac{d^2}{dx^2} + (a + \frac{2}{\sqrt{\beta}} b'(x)) \frac{d}{dx}\Big)\,.
\end{align*}
This operator acts as the infinitesimal generator (modulo the time change $e^x$) of a diffusion evolving in a white noise potential given by $b'$. It is defined on a suitable domain of $L^2(\R_+,m)$ where $m(dx) = \exp(-(a+1) x - \frac{2}{\sqrt{\beta}} b(x)) dx$ with Dirichlet boundary condition at the origin.

The operator $\mathfrak{G}_{\beta,a}$ has a discrete spectrum of simple eigenvalues:
\begin{align*}
	0 < \lambda^{\infty}_{\beta,a}(1)<\lambda^{\infty}_{\beta,a}(2) < \cdots\,.
\end{align*}

Moreover, applying Riccati’s map to the solution of the eigenvalue equation with Dirichlet boundary condition at $0$, Ram\'irez and Rider introduced a family of diffusions characterizing the point process of SBO's eigenvalues which will be instrumental in our proof (see paragraph \ref{subsec:RescaledDiff} below).

\subsection*{Convergence in the high-temperature limit ($\beta \to 0$).} 
We focus here on the small $\beta$ regime, often referred to as the high-temperature limit. Specifically, we investigate the microscopic scale---where the spacing between particles is of order $1$---within the universal scaling limits $N \to \infty$ of finite-$n$ $\beta$ ensembles. The high-temperature regime of finite-$n$ $\beta$-ensembles has been the subject of several recent studies. In the double limit $n \to \infty$ and $\beta \to 0$ such that $\beta n$ converges to a constant, the global fluctuations and macroscopic limits were studied in  \cite{allez2012invariant}, \cite{nakano2018gaussian}, \cite{trinh2021beta}, \cite{hardy2021clt}. Under the same scalings, at the microscopic level, convergence to a Poisson point process has been established in \cite{pakzad2019poisson}, \cite{nakano2020poisson}. This decorrelation phenomenon extends to the universal scaling limits---namely Sine$_\beta$ in the bulk and Airy$_\beta$ at the edge. For these, it was shown in  \cite{allez_dumaz2014_tw, allez_dumaz2014_sine, dumaz_labbe2022},  that under an appropriate rescaling, the eigenvalue point process converges towards a Poisson point process.

For SBO's eigenvalues, when $\beta$ tends to $0$, the smallest eigenvalues approach the hard edge at $0$ at an exponential rate due to the strong attraction at the origin. Specifically, the smallest eigenvalues are of order $\exp(-\Theta(1)/\beta)$. To obtain a non-trivial limit, we consider the rescaled eigenvalues:
\begin{align*}
	\beta \ln(1/\lambda^{\infty}_{\beta,a}(i)),\quad i \geq 1\,.
\end{align*}
Under this rescaling, the spacing between the first points is of order $O(1)$. This transformation reverses the ordering, meaning the rescaled point process is bounded from above. Furthermore, the inverse mapping $\mu \mapsto \exp(-\mu/\beta)$ has a sharp transition near $\mu=0$. In the small $\beta$ limit, for any fixed $\mu >0$, there are only finitely many eigenvalues below $ \exp(-\mu/\beta)$. Conversely, for $\mu \leq 0$, the number of eigenvalues explodes as $\beta \to 0$, as we are no longer exponentially close to the hard edge.

\medskip

Let us first state our convergence theorem:
\begin{theorem}[Convergence of the eigenvalues]\label{theoCV}
As $\beta \to 0$, the rescaled eigenvalue point process of the SBO $(\beta \ln(1/\lambda^{\infty}_{\beta,a}(i)),\; i \geq 1)$ converges in law towards a random simple point process on $\R_+$ with an accumulation point at $0+$. 
\end{theorem}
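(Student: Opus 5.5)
We work through the Riccati diffusions of Ramírez and Rider. For $\lambda>0$, let $\psi_\lambda$ solve $\mathfrak{G}_{\beta,a}\psi_\lambda=\lambda\psi_\lambda$ with $\psi_\lambda(0)=0$ and $\psi_\lambda'(0)=1$, and let $p_\lambda=\psi_\lambda'/\psi_\lambda$ be its Riccati transform. By Sturm--Liouville oscillation, the number of eigenvalues $\le\lambda$ equals the number of explosions of $p_\lambda$ to $-\infty$ on $\R_+$. These explosion counts are monotone in $\lambda$ and are driven by a single Brownian motion $b$.

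\textbf{Step 1: rescale the family.} Set $\lambda=e^{-\mu/\beta}$ with $\mu>0$. Rescale space and time so that the potential term $\lambda e^{x}$ becomes relevant only after time $x\approx\mu/\beta$, and rescale $p$ logarithmically, say $q=\beta\ln|p|$ or the angle variable. Since the noise $\frac{2}{\sqrt\beta}b'$ is strong, the natural time scale is $t=\beta x$. On that scale $\frac{2}{\sqrt\beta}b(t/\beta)$ is again a Brownian motion $2B(t)/\beta$. After taking $\beta\times\log$, the equation becomes a drifted Brownian motion of order $1$: roughly $dQ_\mu=2\,dB+(\text{drift depending on }a)\,dt$. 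It is reflected at a barrier determined by $\mu$, namely the affine line $\mu-t$ coming from $\lambda e^{x}=e^{(t-\mu)/\beta}$. Explosions of $p_\lambda$ correspond to the process hitting that affine line and then restarting, and all of this is coupled across $\mu$ through the common $B$. I would derive this limiting family $(Q_\mu)_{\mu>0}$ explicitly. The paper's later mention of reflected Brownian motion with drift hitting an affine line strongly suggests this is the right object.

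\textbf{Step 2: prove convergence of the counting functions.} For fixed $\mu_1>\dots>\mu_k>0$, show that the numbers $N_\beta(\mu_j)$ of rescaled eigenvalues $\ge\mu_j$ converge jointly in law to the numbers of "explosions" $N(\mu_j)$ of the limiting coupled diffusions. Two ingredients are needed:
\begin{itemize}[label={}]
\item (a) convergence of the SDEs on compact time intervals, by standard stochastic-analysis arguments (coupling with the same $B$ and Gronwall away from singular regions);
\item (b) tightness and control of the time windows where the process passes near $-\infty$ or near the barrier.
\end{itemize}
After time about $\mu/\beta$ in the original variables, the potential dominates. One must show that no further explosions occur afterward with probability tending to $1$, and that the number of explosions before that time is tight. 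This gives convergence of the point process restricted to $[\epsilon,\infty)$, hence convergence on $\R_+$ in the vague topology.

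\textbf{Step 3: identify the limit.} Simplicity follows because two different $\mu$'s give distinct explosion times almost surely: the limiting events are determined by strict hitting of affine lines by a continuous diffusion. Finiteness on $[\epsilon,\infty)$ with an upper bound follows because hitting a line far away is unlikely. The accumulation at $0+$ follows from $N(\mu)\to\infty$ as $\mu\downarrow0$, since the barrier collapses and the diffusion hits it infinitely often.

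\textbf{Main obstacle.} The hardest part is Step 2(b). The explosions are singular events, and convergence of SDEs does not automatically transfer to convergence of hitting counts. I would first use a monotone coupling in $\mu$ together with estimates showing that the rescaled diffusion crosses the relevant levels transversally. Concretely, the probability of approaching a level but not crossing it within a small time should vanish uniformly in $\beta$. This would make the explosion counts continuous functionals of the limit.
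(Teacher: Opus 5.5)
\textbf{Verdict: genuine gap.} Your architecture is the paper's. You use Riccati diffusions at $\lambda=e^{-\mu/\beta}$ coupled through one Brownian motion, a time scale proportional to $\beta x$ with logarithmic space variables, and convergence of the counting functions for finitely many $\mu$'s. You add a ``no explosion after a large time'' lemma and Kallenberg's criterion. Your transversality idea is also how Proposition~\ref{propo:controlexplokT} upgrades closeness of paths to closeness of hitting times, via the levels $\mu-\beta^{1/8}$ and $\mu-2\beta^{1/8}$.

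The gap is that the limiting dynamics you sketch is wrong, and Step 2 depends on it. $\mathfrak{G}_{\beta,a}\psi=\lambda\psi$ reads $-\psi''+(a+\frac{2}{\sqrt\beta}b')\psi'=\lambda e^{-x}\psi$. So the potential term in the Riccati equation is $-\lambda e^{-x}$, not $-\lambda e^{x}$. With $\lambda=e^{-\mu/\beta}$ and $t=4\beta x$ it equals $e^{-(\mu+t/4)/\beta}$. The barrier is therefore the \emph{increasing} line $c_\mu(t)=\mu+t/4$, not $\mu-t$.

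This makes your tightness heuristic backwards. You argue that after time $\approx\mu/\beta$ ``the potential dominates'', hence no more explosions; but a dominant $-\lambda e^{x}$ would drive $p$ to $-\infty$ again and again. Your reason for accumulation at $0+$ (``the barrier collapses'') rests on the same sign error. In the correct picture it comes from the line starting at the small height $\mu$.

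The actual mechanism in Lemma~\ref{tightnessexplosions} is a drift-versus-slope comparison. In the phase $p>0$ the limiting drift is $-a/4<1/4$. So once the line is high, a cycle completes only with exponentially small probability (Proposition~\ref{propo:global_hitting_prob}). Since you do not know where the process is at a large time $T$, you must also handle the worst cases: the process close to the line, or in the phase $p<0$, which for $a\ge 0$ hits the line almost surely. The paper does this by dominating $q$ from time $T$ or $2T$ on by reflected Brownian motions with drift strictly below $1/4$.

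You also miss the two-phase structure and put the reflection in the wrong place. Between two explosions $p$ runs from $+\infty$ through $0$ to $-\infty$, and for a logarithmic variable the passage through $0$ is itself an explosion. Hence the paper uses $q^-=-\beta\ln p$ while $p>0$ and $q^+=\beta\ln(-p)+\mu+t/4$ while $p<0$. Their limits are Brownian motions with drifts $-a/4$ and $(a+1)/4$, \emph{reflected at $0$} by the steep drift $e^{-q/\beta}/4$ and \emph{reset} upon hitting $c_\mu$. Each eigenvalue therefore costs two hits of the line. A single drifted process reflected at the line and exploding at that same line is not the right object.

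This also means Step 2(a) fails as stated. The reflecting zone near $0$ is visited constantly, and there the drift $e^{-q/\beta}$ has a Lipschitz constant that blows up as $\beta\to0$, so Gronwall gives nothing. The paper instead proceeds as follows (Proposition~\ref{propo:compdiffreflectedBM}):
\begin{itemize}
\item it squeezes $q^-$ between Brownian motions reflected at $\pm\beta^{1/4}$;
\item it uses a scale-function estimate to show $q^-$ does not dip below $-\beta^{1/4}$ before reaching the line;
\item it bounds the gap $s$ between the two reflected processes via $s(t)^2\le 2\eps_1\int_0^t s$, with $\eps_1=e^{-\beta^{-3/4}}/2$, which follows from the Skorokhod map;
\item it handles the fast passages through $\pm\infty$, at both kinds of singularity, with Lemma~\ref{lem:ascentexplo}.
\end{itemize}
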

We denote this point process, ordered in a decreasing order, by $(\mu^{\infty}_a (i),\; i \geq 1)$.
The convergence holds for a well-chosen topology of Radon measures on $\R_+$, corresponding to a left-vague/right-weak topology.

\subsection*{Characterization of the limiting point process via coupled diffusions.}
We characterize the limiting process using a family of diffusions $(r^{(a)}_\mu)_{\mu > 0}$, a construction similar in spirit to the one used for the SBO eigenvalues. 

\subsubsection*{Construction of the diffusions.}
This construction relies on a family of Brownian motions with drift reflected at $0$. Recall that a reflected Brownian motion with drift $\delta$ starting at $0$, denoted $R^{(\delta)}$, admits the Skorokhod representation:
\begin{align*}
R^{(\delta)}(t)	= B(t) + \delta \, t + L(t)\,,
\end{align*}
where $B$ is a Brownian motion and $L(t) = \sup_{0 \leq s \leq t}  \max(-B(s) - \delta\, s,0)$. 

Recall the classical Skorokhod reflection problem: for any continuous path $y \colon [0, \infty) \to \mathbb{R}$ with $y(0) \ge 0$, there exists a unique pair of continuous functions $(x, \ell)$ such that
\begin{itemize}
	\item $x(t) = y(t) + \ell(t) \ge 0$ for all $t \ge 0$,
	\item $\ell$ is non-decreasing with $\ell(0) = 0$,
	\item $\int_0^t x(s) \, d\ell(s) = 0$ for all $t \ge 0$.
\end{itemize}
For a detailed proof of existence and uniqueness of this mapping, see e.g. \cite[Lemma 3.6.14]{karatzas} or \cite[Chapter IX, Theorem 2.3]{yor}.
In our context, we apply the Skorokhod mapping to the Brownian motion with drift $\delta$, which yields the pair of the reflected process $R^{(\delta)}$ and $L$ (corresponding to the local time up to a constant factor).

Let us first define the diffusion $r^{(a)}_\mu$ for a \emph{single fixed parameter} $\mu >0$. To ease the presentation, we will drop in the following the superscript $a$. We define the  \emph{critical line} as $c_{\mu}(t) = \mu + t/4$ for $t \geq 0$. The process is constructed recursively. We refer to Figure \ref{fig:1diff} for an illustration.
\begin{itemize}
	\item \textbf{Initialization:} Set $r_{\mu}(0) = 0$.
	\item \textbf{Phase $-$:} The process evolves as a reflected Brownian motion with drift $-a/4$ until it hits the critical line $c_\mu$. Let $\xi^-_{\mu}(1)$ denote this hitting time.
	\item \textbf{Reset:} Immediately upon hitting $c_\mu$,	the process jumps to $0$.
	\item \textbf{Phase $+$:} It then evolves as a reflected Brownian motion with drift $(a+1)/4$ until it hits $c_\mu$ again. Let $\xi^+_{\mu}(1)$ denote this second hitting time.
	\item \textbf{Recursion:} This cycle repeats, alternating between the drifts $-a/4$ and $(a+1)/4$ after each hit of the critical line. We denote the hitting times as $\xi^-_{\mu}(i)$ and $\xi^+_{\mu}(i)$ for $i \geq 2$.
\end{itemize}
\begin{figure}[!h]
	\includegraphics[width=12cm]{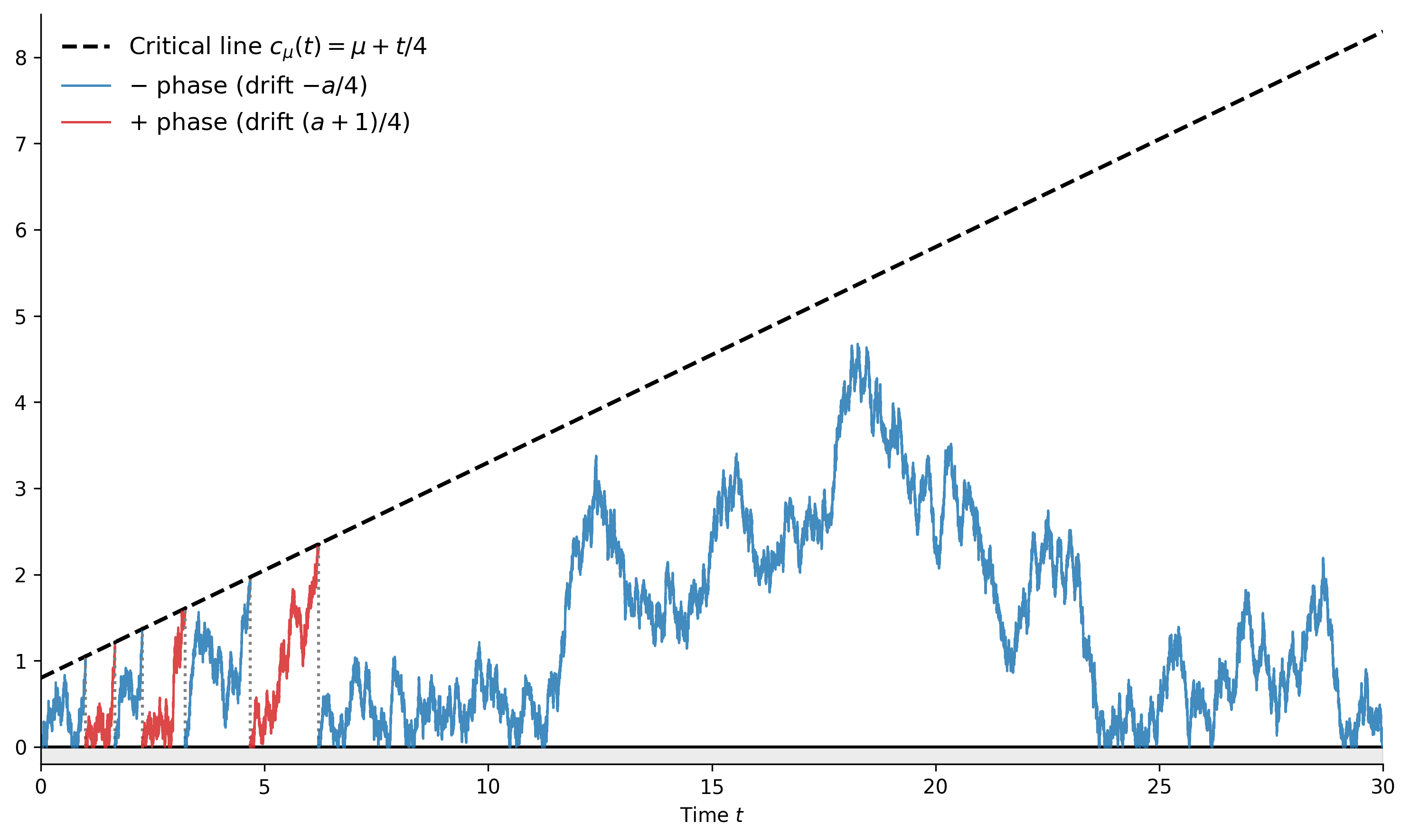}
	\caption{Simulation of the diffusion $r_\mu$ for $\mu = 0.8$ and $a=1$.}\label{fig:1diff}
\end{figure}

    Since the critical line $t \mapsto \mu + t/4$ grows linearly, the probability that the reflected Brownian motion with drift reaches it in finite time is strictly less than one for at least one of the two phases (see Remark \ref{remDriftCL} below). Consequently, the total number of completed cycles is finite almost surely (it is stochastically dominated by a geometric distribution).

    \begin{remark}[Drifts versus critical line]\label{remDriftCL}
	A reflected Brownian motion with drift $\delta$ hits the critical line $t \mapsto \mu + t/4$ almost surely if and only if $\delta \geq 1/4$.
	\begin{itemize}
		\item For the phase with drift $-a/4$: Since we assume $a > -1$, this drift is always strictly less than $1/4$.
		\item For the phase with drift $(a+1)/4$: If $a \geq 0$, the drift is $\geq 1/4$: a hit holds almost surely. If $a \in (-1,0)$, the drift is less than $1/4$, so the hit occurs with probability strictly less than $1$.
	\end{itemize}
	Thus, for $a \in (-1,0)$, both phases have a probability strictly less than $1$ to hit the critical line. 
\end{remark}

We now define the family of diffusions $(r_\mu)_{\mu}$ simultaneously for all $\mu >0$ by coupling them through a single driving Brownian motion $B$.

For any drift $\delta$ and starting time $s \geq 0$, let us define the path $R^{(\delta,s)}$ starting at $0$ at time $s$, driven by $B$:
\begin{align*}
	R^{(\delta, \, s)}(t) = 	Y^{\delta,s}(t) + \sup_{s \leq u \leq t} \max(0,-Y^{\delta,s}(u))\, , \qquad t \geq s \,,
\end{align*}
where $Y^{\delta,s}(t) = \delta (t-s) + (B(t)-B(s))$ is the non-reflected path.

Given this coupling of reflected Brownian motion, we define the family $(r_\mu)_\mu$ as follows. Let $\xi^+_{\mu}(0) :=0$. For all $i \geq 1$, we set
\begin{align*}
	r_\mu(t) := R^{(-a/4,\, \xi^+_{\mu}(i-1))}(t),\qquad \mbox{for }t \in [\xi^+_{\mu}(i-1),\xi^-_{\mu}(i))\,,
\end{align*}
where the hitting time is
\begin{align*}
	\xi^-_{\mu}(i) := \inf\{t \geq \xi^+_{\mu}(i-1),\; R^{(-a/4, \, \xi^+_{\mu}(i-1))}(t)  = c_\mu(t)\}\,.
\end{align*}
Subsequently,
\begin{align*}
	r_\mu(t) := R^{((a+1)/4, \, \xi^-_{\mu}(i))}(t),\qquad \mbox{for }t \in [\xi^-_{\mu}(i),\xi^+_{\mu}(i))\,,
\end{align*}
where
\begin{align*}
	\xi^+_{\mu}(i) := \inf\{t \geq \xi^-_{\mu}(i),\;  R^{((a+1)/4, \xi^-_{\mu}(i))}(t)  = c_\mu(t)\}\,.
\end{align*}

An illustration of the coupled diffusions $r_{\mu}$ for two different values of $\mu$ can be found in Figure \ref{fig:2diff}.
\begin{figure}[!h]
	\includegraphics[width=13cm]{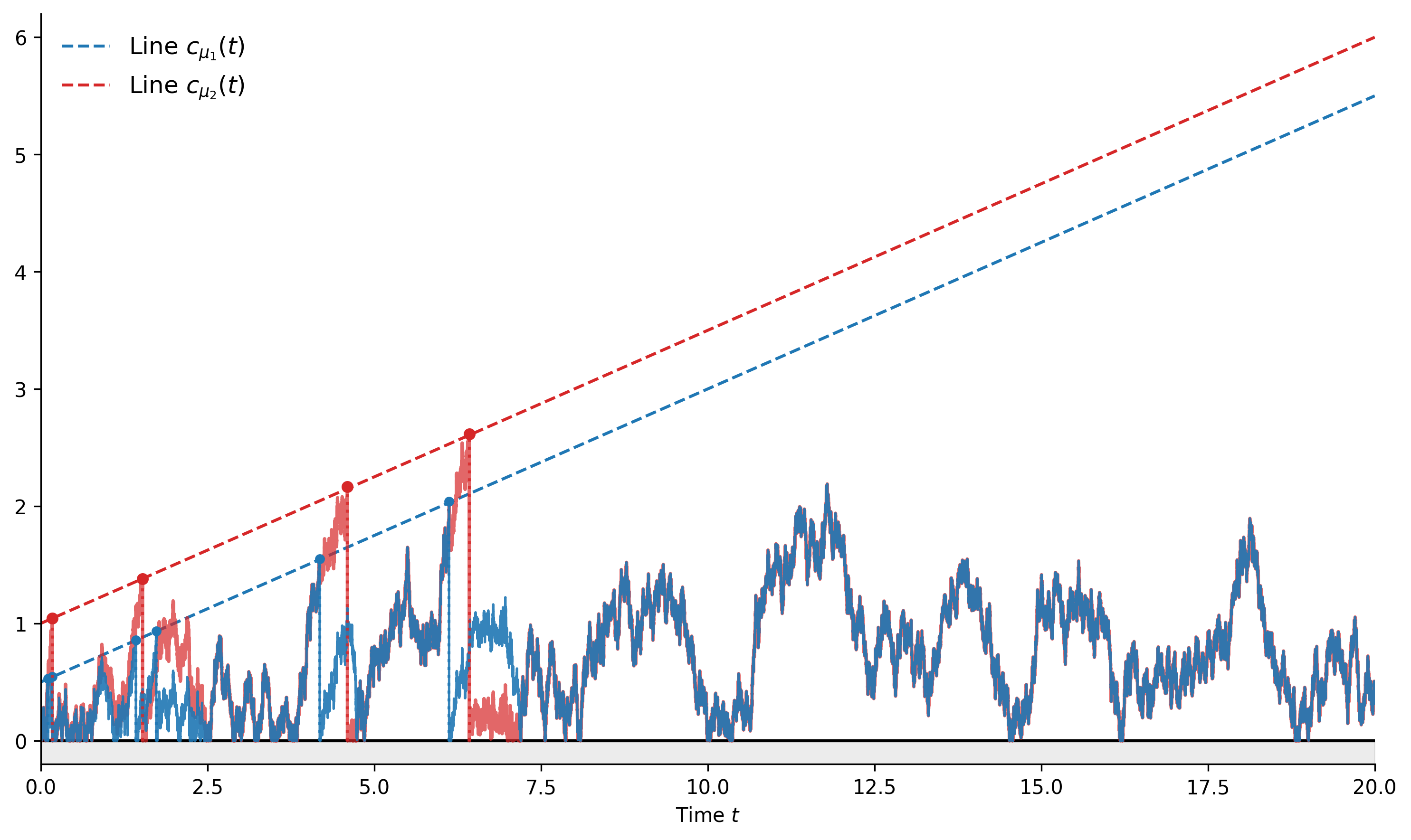}
	\caption{Simulation of two diffusions $r_\mu$ for $\mu = 0.5$ (blue) and $\mu =1$ (red), with $a=1$ (color online).}\label{fig:2diff}
\end{figure}

\subsubsection*{Characterization of the limiting point process}
  We associate a random counting measure to the completion times of the $+$ phases only
\begin{align*}
	\nu_\mu := \sum_{i \geq 1} \delta_{\xi^+_\mu(i)}\,.
\end{align*}
The mass $\nu_\mu(\mathbb{R}_+)$ corresponds to the total number of full cycles completed by the diffusion $r_\mu$. It is easy to see that the total mass is finite almost surely: as observed in Remark \ref{remDriftCL}, at least one of the two phases has a drift strictly less than $1/4$. Since the probability for a reflected Brownian motion to hit an affine line growing strictly faster than its drift is strictly less than $1$, the number of points of $\nu_\mu$ is stochastically dominated by a geometric random variable.

As $\mu$ increases, the barrier $c_\mu$ shifts upwards, making it harder to hit. Thus, the map $\mu \mapsto \nu_\mu(\mathbb{R}_+)$ is non-increasing almost surely and takes values in $\mathbb{N} \cup \{0\}$.

\medskip

We define the limiting point process $\mathcal{M}_a$ on $(0, \infty)$ via its cumulative count: for any $\mu > 0$, the number of points in $\mathcal{M}_a$ greater than $\mu$ is given exactly by the number of cycles of $r_\mu$:
\begin{align*}
	\mathcal{M}_a([\mu, \infty)) := \nu_\mu(\mathbb{R}_+)\,.
\end{align*}

\medskip

We can now state the characterization of the limiting point process $\sum_{ i\geq 1} \delta_{\mu_a^{\infty}(i)}$ defined in Theorem \ref{theoCV}:

\begin{theorem}[Characterization of the limiting point process]\label{theoCharact}
	The limiting point process $\sum_{ i\geq 1} \delta_{\mu_a^{\infty}(i)}$ has the same distribution as $\mathcal{M}_a$.
\end{theorem}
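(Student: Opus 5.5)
We will use the Ramírez–Rider Riccati diffusion characterization of the SBO spectrum. Set $f$ to be the solution of $\mathfrak{G}_{\beta,a} f = \lambda f$ with $f(0)=0$, $f'(0)=1$. Then the number of eigenvalues below $\lambda$ equals the number of zeros of $f$ on $(0,\infty)$, or equivalently the number of explosions to $-\infty$ of the Riccati variable $p_\lambda = f'/f$ (suitably weighted). For $\lambda = e^{-\mu/\beta}$ we therefore have
\[
\#\{i:\ \beta\ln(1/\lambda^\infty_{\beta,a}(i)) \ge \mu\} = \#\{\text{explosions of } p_{e^{-\mu/\beta}}\}.
\]
Since the family $(p_\lambda)_\lambda$ is monotone in $\lambda$ and driven by a single Brownian motion $b$, it suffices to prove the following. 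For every finite collection $\mu_1<\dots<\mu_k$, the vector of explosion counts converges jointly in law to $(\nu_{\mu_j}(\mathbb{R}_+))_{j\le k}$. Joint convergence of the counting functions on $[\mu,\infty)$ for $\mu>0$ then identifies the finite-dimensional laws of the limit in Theorem \ref{theoCV} with those of $\mathcal{M}_a$. These laws determine the point process.

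The core step is a rescaling of the Riccati diffusion. Write $p_\lambda$ in logarithmic coordinates, $\rho = \beta\ln|p|$ or a close variant, with time sped up by $1/\beta$ (replace $x$ by $t/\beta$). The time change $e^x$ in the operator makes $\lambda e^{x}$ the competing term, and this quantity becomes of order one exactly along the line $\mu = x\beta$ up to constants. This is the origin of the critical line $c_\mu(t)=\mu+t/4$, once the factor coming from the variance $4/\beta$ of the noise is accounted for. In these coordinates, consider first the phase where $f$ and $f'$ have the same sign (the process after a zero of $f$). There $\beta\ln p$ behaves like a Brownian motion with drift $-a/4$, with the constraint $p\gtrsim 0$ acting as a reflection at $0$. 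This is because the term $p^2$ pushes $\ln p$ upward whenever it becomes small. As long as $\lambda e^{t/\beta}$ is negligible compared with $p^2$, nothing else happens. When the log-level reaches $c_\mu(t)$, the potential term takes over in a time $o(1)$ and drives the process through a fast transition. This is the "reset" to $0$. After the transition the roles are swapped: the log of the reciprocal ratio evolves with the dual drift $(a+1)/4$ (the extra $+1/4$ comes from the weight $e^{x}$ and Itô corrections), again reflected at $0$. Reaching $c_\mu$ a second time then produces an explosion, which means a zero of $f$, and the cycle restarts. Accordingly, we will prove on each phase a quantitative comparison: up to a uniformly small error, the rescaled process is the Skorokhod map of $\beta$-rescaled $b$ with the appropriate drift, where $\beta^{1/2}b(\cdot/\beta)$ is itself a Brownian motion $B$. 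We will also prove that the fast transitions at the critical line last time $o(1)$ and occur with probability tending to $1$.

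We then assemble the pieces. The driving Brownian motion is the same for all $\mu$, and each phase is a continuous functional of $B$ (via the Skorokhod map) together with hitting times of a line. Hitting times of $c_\mu$ by reflected Brownian motion with drift are almost surely continuity points, because a Brownian path crossing a line crosses it strictly. Hence the continuous mapping theorem yields joint convergence of all hitting times $\xi^\pm_{\mu_j}(i)$, and so of the counts. Tightness in the number of cycles comes from the geometric domination noted after Remark \ref{remDriftCL}. For the prelimit we need a uniform-in-$\beta$ version: each phase has probability bounded away from $1$ of ever reaching the critical line. We also need to rule out explosions at late times $t\to\infty$ where approximations might degrade. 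This is done by a comparison with a drift strictly below $1/4$ in at least one phase, uniformly in small $\beta$.

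The main obstacle will be the precise analysis near the critical line and near the reflecting boundary. First, we must show that the approximation by a reflected Brownian motion with exactly drift $-a/4$, respectively $(a+1)/4$, holds with errors vanishing uniformly on compact time intervals, including the $\beta\ln$ of a process that spends time near $0$. The second requirement is that the transition when $\lambda e^{t/\beta}\approx p^2$ happens instantaneously on the $t$-scale and lands exactly at level $0$ for the next phase. To handle this we will use explicit SDE comparisons: sandwich $p_\lambda$ between solutions with slightly modified drifts, then send the modification to $0$ after $\beta\to0$.
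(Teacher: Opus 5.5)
Your plan is essentially the paper's proof. It uses the same ingredients: Riccati diffusions driven by one Brownian motion, the rescaling $\lambda=e^{-\mu/\beta}$, $x=t/(4\beta)$ into logarithmic coordinates, a sandwich comparison with reflected Brownian motions on each phase, $o(1)$-time ascents from $-\infty$ and explosions at the critical line, a separate argument excluding late explosions, and identification through the finite-dimensional laws of the counting functions. Your continuity-of-hitting-times step is a compact version of the paper's nested estimates on the explosion times.

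When you write it out, correct the heuristics, which have sign and criterion errors.
\begin{itemize}
\item Because of the prefactor $e^{x}$ in $\mathfrak{G}_{\beta,a}$, the potential enters the Riccati equation as $-\lambda e^{-x}\,dx$, not $\lambda e^{x}$. Hence $\ln|p|$ has drift $a-p-\lambda e^{-x}/p$.
\item For $p>0$, the level $p\approx 1$ acts as the reflecting barrier, because the $-p^2$ term pushes large $p$ down. The level $p\approx\lambda e^{-x}$, i.e.\ $-\beta\ln p\approx \mu+t/4$, triggers the passage through $0$. For $p<0$ the two roles are exchanged.
\item The right coordinates are therefore $-\beta\ln p$ (drift $-a/4$, not $\beta\ln p$) in the $-$ phase and $\beta\ln|p|+\mu+t/4$ (drift $(a+1)/4$) in the $+$ phase.
\item Your criterion $p^2\approx\lambda e^{t/\beta}$ plays no role in the transitions.
\end{itemize}
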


\subsection*{Comments and further results.} This characterization allows us to compute several statistics for the limiting point process.

\subsubsection*{Largest eigenvalue}
 For $a \geq 0$, the distribution of the largest eigenvalue is related to the hitting time of an affine boundary and takes a simple form:
\begin{theorem}[First eigenvalue limit for $a \geq 0$]
Let $R^{(-a/4)}(t)$ be a reflected Brownian motion (at $0$) with drift $-a/4$. Then the probability that the largest point $\mu^{\infty}_a (1)$ exceeds $\mu$ equals the probability that $R^{(-a/4)}$ hits the affine line $t \mapsto \mu + t/4$ in a finite time.
\end{theorem}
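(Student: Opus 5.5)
By Theorem \ref{theoCharact}, the largest point $\mu^{\infty}_a(1)$ has the same law as the largest point of $\mathcal{M}_a$. The plan is therefore to reduce the claim to a statement about the single diffusion $r_\mu$ at one fixed $\mu>0$, and then to show that the $+$ phase contributes nothing when $a\ge 0$.

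First, I rewrite the event in terms of the counting function. Since $\mathcal{M}_a([\mu,\infty)) = \nu_\mu(\mathbb{R}_+)$, we have
\begin{align*}
\{\mu^\infty_a(1) \geq \mu\} \overset{(d)}{=} \{\mathcal{M}_a([\mu,\infty))\geq 1\} = \{\nu_\mu(\mathbb{R}_+)\ge 1\}.
\end{align*}
The event $\{\nu_\mu(\mathbb{R}_+)\ge 1\}$ says that $r_\mu$ completes at least one full cycle, that is, $\xi^+_\mu(1)<\infty$. I will treat the cases $\ge\mu$ and $>\mu$ as interchangeable. The law of $\mu^\infty_a(1)$ has no atoms, because the hitting probability below is continuous in $\mu$. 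Alternatively, one can take a monotone limit in $\mu$ using the a.s.\ monotonicity of $\mu\mapsto\nu_\mu(\mathbb{R}_+)$.

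Next, I decompose the event $\{\xi^+_\mu(1)<\infty\}$ into the two phases:
\begin{align*}
\{\xi^+_\mu(1)<\infty\} = \{\xi^-_\mu(1)<\infty\}\cap\{\xi^+_\mu(1)<\infty\}.
\end{align*}
The first event is exactly the event that $R^{(-a/4,0)}$ hits $c_\mu$ in finite time, which is the desired probability.

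For the second phase, I condition on $\mathcal{F}_{\xi^-_\mu(1)}$ on the event $\{\xi^-_\mu(1)<\infty\}$. By the strong Markov property of $B$, the process $R^{((a+1)/4,\,\xi^-_\mu(1))}$ is then a fresh reflected Brownian motion with drift $(a+1)/4$, started at $0$. It must hit the line $t\mapsto c_\mu(\xi^-_\mu(1)+t)=\mu'+t/4$, where $\mu'=\mu+\xi^-_\mu(1)/4$. Because $a\ge0$, its drift is at least $1/4$, so by Remark \ref{remDriftCL} it hits this line almost surely. This gives $\P(\xi^+_\mu(1)<\infty\mid \xi^-_\mu(1)<\infty)=1$, and hence the two probabilities coincide.

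The main point to justify is Remark \ref{remDriftCL} in the boundary case $\delta=1/4$, i.e.\ $a=0$. Set $D(t)=R(t)-t/4 = B(t)+L(t)$. This process is at least $B(t)$, and $\limsup_t B(t)=+\infty$, so $D$ crosses any level $\mu'$ almost surely. For $\delta>1/4$ the same argument applies with a positive linear drift, which makes it immediate. The remaining care is only in applying the strong Markov property at the random time $\xi^-_\mu(1)$ and in handling the random level $\mu'$, which is $\mathcal{F}_{\xi^-_\mu(1)}$-measurable, by conditioning.
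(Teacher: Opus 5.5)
Your proof is correct and follows the argument the paper has in mind. The paper gives no separate proof: it reads the statement off Theorem~\ref{theoCharact} and the construction of $r_\mu$, using Remark~\ref{remDriftCL} for the fact that the $+$ phase hits $c_\mu$ almost surely when $a\ge 0$ (recalled again at the start of Section~\ref{sec:aympkexplo}). Your strong Markov step at $\xi^-_\mu(1)$ and your comparison $R(t)-t/4\ge B(t)$ for drift $\ge 1/4$ (which covers $a=0$) just make explicit what the paper leaves implicit.
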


For $a = 0$, using Corollary 5 of \cite{SalminenYor}, this probability is given by
\begin{align}\label{RBM0hit}
	\P(\exists t \geq 0,\; R^{(0)}(t) \geq \mu + t/4) = 2  \sum_{k \geq 1} (-1)^{k-1} \exp(-  \mu k^2/2)\,.
\end{align}
We have a conjecture for an explicit expression of the largest eigenvalue for all $a >0$, see the equation \eqref{expressiondensityfirst}.

\subsubsection*{Large $a$ limit conjecture}
Moreover, we expect that one recovers a Poisson point process in the large $a$ limit. This is due to the fact that the coupled diffusions will have to move against a strong drift to hit the critical line, and therefore will typically hits it very quickly when the hitting time is finite: this compensates the growth of the critical line. This expected transition to Poissonian statistics mirrors the behavior observed in the high-temperature bulk or edge \cite{allez_dumaz2014_sine, allez_dumaz2014_tw, dumaz_labbe2022}. This can be linked as well to the transition of the Stochastic Bessel Operator to the Stochastic Airy Operator as $a \to \infty$ proved in \cite{dumaz_li_valko2021}. We formalize this as a conjecture.
\begin{conjecture}
	When $a \to \infty$, the rescaled point process $\mathcal{M}_a$ converges towards a Poisson point process.
\end{conjecture}

\subsubsection*{Finite-$n$ scaling}
Finally, consider the finite-$n$ $(\beta,a)$-Laguerre ensemble. When $\beta \to 0$, we obtain the following density for the rescaled particles  $\mu^{(n)}_k = \beta \ln(1/\gl^{(n)}_k)$. 
\begin{align*}
	\frac{1}{Z_{n,a}} \prod_{i=1}^n \exp(- (i-1 + (a+1)/2) \mu^{(n)}_i) 1_{\mu^{(n)}_1 > \mu_2^{(n)} > \cdots > \mu_n^{(n)} >0} \,.
\end{align*}
This point process is naturally described through its gaps $g_j := \mu^{(n)}_{j} - \mu^{(n)}_{j+1}$, for $j \in \{1,\dots,n\})$ (where $\mu^{(n)}_{n+1} := 0$). These gaps are $n$ independent random variables, where $g_j$ follows an exponential distribution with parameter $j(j+a)/2$. Therefore, one can construct the point process from the smallest point to the largest by successively adding independent gaps. 
Reconstructing the points from the gaps requires working backwards: the largest point is $\mu^{(n)}_{1} = \sum_{j=1}^n g_{j}$, and the subsequent points are given by the tail sums $\mu^{(n)}_{i} = \sum_{j=i}^n g_{j}$.

One can check that the largest points of this point process have a well-defined limit as $n \to \infty$. This limit can be constructed recursively: first generate an infinite sequence of independent gaps $(g_j , \; j \geq 1)$ following exponential distributions with parameters $j(j+a)/2$ . The largest point is then given by the infinite sum $\hat{\mu}^\infty_a(1) :=  \sum_{j=1}^\infty g_{j}$, and subsequent points are found by successively removing gaps: 
\begin{align}
	\hat{\mu}^\infty_a(k) :=  \sum_{j=k}^\infty g_{j} \,.\label{expressionhatmu}
\end{align}

Taking the double limit ---first as $\beta \to 0$ and then as $n \to \infty$--- we see that the repulsion does not disappear but its nature changes. It is no longer local as the gaps are given by exponential laws (which allow for small distances between the points), rather, the repulsion induces a deterministic shift in the exponential rates.

It turns out that in the case $a=0$, one can show that the largest point $\mu^\infty_0(1)$ of $\mathcal{M}_0$ and the largest point $\hat{\mu}^\infty_0(1)$ coincide.
\begin{proposition}[Exact matching for $a=0$]
The distribution of $\mu^\infty_0(1)$ and $\hat{\mu}^\infty_0(1)$ are equal.
\end{proposition}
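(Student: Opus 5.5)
By the first-eigenvalue theorem above with $a=0$ and formula \eqref{RBM0hit}, the plan is to compute both tails explicitly and compare them. On the diffusion side this gives, for all $\mu>0$,
\begin{align*}
\P(\mu^\infty_0(1) > \mu) = 2\sum_{k\geq 1}(-1)^{k-1} e^{-\mu k^2/2}.
\end{align*}
So it suffices to show that $\hat\mu^\infty_0(1)=\sum_{j\geq 1} g_j$, with independent $g_j\sim\mathrm{Exp}(j^2/2)$, has exactly this tail.

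\textbf{Step 1: Laplace transform of the sum.} The series converges almost surely because $\sum_j 2/j^2<\infty$. By independence,
\begin{align*}
\E\big[e^{-s\hat\mu^\infty_0(1)}\big]=\prod_{j\geq 1}\frac{j^2/2}{j^2/2+s}=\prod_{j\geq1}\Big(1+\frac{2s}{j^2}\Big)^{-1}=\frac{\pi\sqrt{2s}}{\sinh(\pi\sqrt{2s})},
\end{align*}
using Euler's product $\sinh(\pi z)/(\pi z)=\prod_j(1+z^2/j^2)$ with $z=\sqrt{2s}$.

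\textbf{Step 2: Laplace transform of the candidate tail.} Set $F(\mu)=2\sum_{k\ge1}(-1)^{k-1}e^{-\mu k^2/2}$. The associated density is $f(\mu)=-F'(\mu)=\sum_{k\geq1}(-1)^{k-1}k^2e^{-\mu k^2/2}$. Its Laplace transform is
\begin{align*}
\int_0^\infty e^{-s\mu}f(\mu)\,d\mu=\sum_{k\ge1}(-1)^{k-1}\frac{k^2}{s+k^2/2}.
\end{align*}
The right-hand side should be recognized as the partial fraction expansion of $\pi\sqrt{2s}/\sinh(\pi\sqrt{2s})$. Indeed, $\pi z/\sinh(\pi z)$ has simple poles at $z=\pm ik$, and its Mittag-Leffler expansion is
\begin{align*}
\frac{\pi z}{\sinh \pi z}=1+2\sum_{k\ge1}(-1)^k\frac{z^2}{z^2+k^2}=1-2\sum_{k\geq 1}(-1)^{k-1}\Big(1-\frac{k^2}{z^2+k^2}\Big).
\end{align*}
The sum $\sum_k(-1)^{k-1}$ must be interpreted via Abel summation as $1/2$. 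With that interpretation the expression becomes $2\sum_k(-1)^{k-1}k^2/(z^2+k^2)$, which equals our transform at $z^2=2s$, since $k^2/(s+k^2/2)=2k^2/(2s+k^2)$.

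To avoid the conditionally convergent manipulation, I would verify the identity rigorously another way. One option is to check the equivalent tail identity $\int_0^\infty e^{-s\mu}F(\mu)\,d\mu=(1-\hat\phi(s))/s$, whose series $2\sum_k(-1)^{k-1}/(s+k^2/2)$ converges absolutely; this is the classical expansion of $1/(z\sinh)$-type functions. The other option is to recognize directly that $\hat\mu^\infty_0(1)$ has the law of the hitting time of level $1$ by a Bessel(3) process (up to scaling), whose Laplace transform $\sqrt{2s}/\sinh\sqrt{2s}$ and series tail are classical (Kent, Pitman–Yor). Matching the time scaling, namely rates $j^2/2$ versus the eigenvalues $j^2\pi^2/2$, just means rescaling by $\pi^2$.

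\textbf{Step 3: Conclusion.} Both distributions on $(0,\infty)$ have the same Laplace transform, so they coincide by injectivity of the Laplace transform.

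The main obstacle is the justification in Step 2 of the partial-fraction identity with the alternating, only conditionally convergent series. I would handle it through the tail formulation, where convergence is absolute, checking the residues of $(1-\pi\sqrt{2s}/\sinh\pi\sqrt{2s})/s$ at $s=-k^2/2$ and its decay at infinity, or simply cite the Bessel(3) hitting time identity.
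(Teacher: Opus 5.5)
Your proposal is correct and follows the paper's proof. The Euler product identifies the Laplace transform of $\hat\mu^\infty_0(1)$ as $\pi\sqrt{2s}/\sinh(\pi\sqrt{2s})$, and you match it with the tail \eqref{RBM0hit} via the partial-fraction expansion of $\pi/\sinh(\pi z)$, applied, as you propose, to the absolutely convergent tail transform $2\sum_k(-1)^{k-1}/(s+k^2/2)$; this makes rigorous the step the paper simply calls ``inverting''. One small correction: the termwise transform $\sum_k(-1)^{k-1}k^2/(s+k^2/2)$ of the density is not conditionally convergent but divergent (its terms tend to $\pm 2$), so the tail formulation is genuinely required, not merely convenient.
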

\begin{proof}
	The distribution of $\hat{\mu}^\infty_0(1)$ is that of an infinite sum of independent exponential random variables of parameter $j^2/2$. Its Laplace transform is given by
	\begin{align*}
		\E[\exp(- \theta \, \hat{\mu}^\infty_0(1) )] = \prod_{j=1}^\infty \frac{1}{(1+ \frac{2\theta}{j^2})}\,.
	\end{align*}
	This product has an explicit simple expression in this case:
	\begin{align*}
		\prod_{j=1}^\infty \frac{1}{1+ \frac{2\theta}{j^2}} = \frac{\pi \sqrt{2 \theta}}{\sinh(\pi \sqrt{2 \theta})}\,.
	\end{align*}
	Inverting this Laplace transform, we obtain for the density:
	\begin{align*}
		\sum_{k=1}^{\infty} (-1)^{k-1} k^2 \exp(-k^2 t/2)\,,
	\end{align*}
	which integrates to give the tail probability \eqref{RBM0hit}.
\end{proof}

We conjecture that this correspondence remains valid for $a>0$ across the entire point process. If true, this would imply a striking diffusion representation for \eqref{expressionhatmu} --- a connection that is, to the best of our knowledge, entirely unexpected. As a byproduct, establishing this conjecture would provide an exact integral formula for the probability that a reflected Brownian motion with negative drift hits an affine line.
\begin{conjecture}[Matching for all $a \geq 0$]\label{conjageq0}
For all $a \geq 0$, the joint distribution of $(\hat{\mu}_a^{\infty}(i),\; i\geq 1)$ and $({\mu}_a^{\infty}(i),\;i\geq 1)$ are equal. As a corollary, using the explicit expression for the density of $\hat{\mu}_a^{\infty}(1)$ given by:
\begin{align}
	\sum_{j=1}^{\infty} (-1)^{j-1} \frac{j(2j+a)}{2} \binom{j+a}{j} e^{-\frac{j(j+a)}{2}x}\,.\label{expressiondensityfirst}
\end{align}
This matching yields the following explicit integral expression for the probability that a reflected Brownian motion with drift $-a$ hits the affine line $\mu +  b t$:
\begin{align} \P\big(\exists t \geq 0, \; R^{-a}(t) \geq \mu + b t\big) = \sum_{j=1}^\infty (-1)^{j-1} \left[ \binom{j + a/b}{j} + \binom{j + a/b - 1}{j-1} \right] e^{-2 \mu j (jb + a)}\,. \label{conj_reflectedBMdrift}
	\end{align}
\end{conjecture}
A natural pathway to prove the integral expression \eqref{conj_reflectedBMdrift} would be to generalize the approach of Salminen and Yor to a reflected Brownian motion with negative drift.

\begin{remark}[Breakdown in the regime $a \in (-1, 0)$]
	We emphasize that Conjecture \ref{conjageq0} is restricted to $a \geq 0$. For $a \in (-1, 0)$, this exact matching is no longer true, as the asymptotic tails do not coincide, as we will see below. This provides an explicit example where the limits do not commute, illustrating why the exchange of such limits is usually a quite subtle issue in the literature.
\end{remark}

	\medskip
	
\subsubsection*{Non-poissonian limit}
Thanks to its interpretation with reflected Brownian motions, one can compute the asymptotic of finding more than $k$ points above some large level $\mu$ when $\mu \gg1$, which has an explicit expression. 
\begin{proposition}[Asymptotics for the largest $k$ eigenvalues]\label{propo:asympkexplo}
	Fix $a \geq -1$. For any fixed integer $k \geq 1$, we have as $\mu \to \infty$,
	\begin{align*}
		\P[\mathcal{M}_a[\mu,\infty) \geq k] = \exp\Big(- \frac{k(|a|+k)}{2} \mu (1+ o(1))\Big)\,.
	\end{align*}
\end{proposition}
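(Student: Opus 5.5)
The plan is to turn the event $\{\mathcal{M}_a[\mu,\infty)\geq k\}$ into a path event for a single diffusion and then estimate it by a sample-path large deviation argument with speed $\mu$. By definition, $\mathcal{M}_a[\mu,\infty)=\nu_\mu(\R_+)$, so the event is $\{\xi^+_\mu(k)<\infty\}$. This means $r_\mu$ completes $2k$ consecutive phases, alternating between drift $-a/4$ and drift $(a+1)/4$, each started from $0$ and each ending on the critical line $c_\mu$. Write $\delta_{2j-1}=-a/4$ and $\delta_{2j}=(a+1)/4$ for the phase drifts, and $g_i=\tfrac14-\delta_i$ for the gaps, so $g_{2j-1}=(a+1)/4>0$ and $g_{2j}=-a/4$. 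By the strong Markov property at the hitting times, the probability is an iterated integral over the phase durations $t_1,\dots,t_{2k}$. Let $T_i=t_1+\dots+t_i$ and $h_i=\mu+T_{i-1}/4$. In phase $i$ the process must travel from $0$ at time $T_{i-1}$ to the moving boundary, which in relative coordinates is the line $h_i+t/4$. We rescale time and space by $\mu$, setting $t_i=\mu\tau_i$.

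The first step is a one-phase estimate with the correct exponential rate. The reflected path has the representation $R^{(\delta,s)}(t)=\sup_{s\le u\le t}(Y(t)-Y(u))$. Hence reaching the line $h+t/4$ at time about $\mu\tau$ requires an increment of the free Brownian motion with drift of size at least $h+t/4-\delta(t-u)$ over some subinterval. A union bound over a mesh of $u$ costs only a polynomial factor. The Gaussian estimate then gives the upper bound
\[
\exp\Big(-\frac{(h+g\,t)_+^2}{2t}(1+o(1))\Big)
\]
for the probability of hitting near time $t$. The matching lower bound comes from forcing the unreflected path to stay in a tube around the straight line from $0$ to $h+t/4$; this tube has the same exponential cost. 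Combining the two bounds via Varadhan/Laplace-type arguments over the compact, discretized set of $(\tau_1,\dots,\tau_{2k})$, together with a crude tail bound excluding very long or very short durations, yields
\[
-\lim_{\mu\to\infty}\frac1\mu\ln\P[\mathcal{M}_a[\mu,\infty)\ge k]=I_k:=\inf_{\tau\in(0,\infty]^{2k}}\sum_{i=1}^{2k}\frac{\big(1+\tfrac14(\tau_1+\dots+\tau_{i-1})+g_i\tau_i\big)_+^2}{2\tau_i}.
\]
Here a phase with $g_i\le0$ costs nothing if its duration is long enough, but a long duration raises all later starting levels. As a sanity check, for $k=1$ and $a\ge0$, the first phase gives $\inf_\tau(1+\tfrac{a+1}{4}\tau)^2/(2\tau)=(a+1)/2$, attained at $\tau=4/(a+1)$. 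The second phase is then free in the limit (for $a=0$ only as $\tau_2\to\infty$, which is harmless since it is the last phase). This agrees with $k(|a|+k)/2$.

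The second step, which I expect to be the main obstacle, is solving this variational problem and showing $I_k=k(|a|+k)/2$. I would use backward dynamic programming. The cost of the remaining phases, starting at level $h$, is homogeneous of degree one in $h$, because the problem is invariant under scaling $h$ and the times together. So the value function has the form $V_m(h)=v_m h$. The Bellman recursion then reduces to one-dimensional minimizations of the form
\[
\min_{\tau}\Big[\frac{(h+g\tau)_+^2}{2\tau}+v\,(h+\tau/4)\Big],
\]
which are explicit quadratic optimizations. I would prove by induction on the number of remaining cycles that the coefficients $v$ follow the pattern producing $k(|a|+k)/2$ at $h=1$. The cases $a\ge0$ and $a\in(-1,0)$ have to be treated separately. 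When $a\in(-1,0)$, the $+$ phases also have positive gap, and the absolute value $|a|$ appears precisely because the roles of the two phases swap. Some care is also needed for boundary optima, namely $\tau_i\to\infty$ in a final free phase, or $\tau_i=0$. Finally, the lower bound must handle reflection: the reflection only helps the process stay above $0$, and the tube construction keeps the path away from $0$ except near the starting time. So reflection does not change the exponential cost.
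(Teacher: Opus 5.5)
Your plan is essentially the paper's proof: the same strong Markov decomposition into alternating phases, the same one-phase rate $(h+gt)_+^2/(2t)$, discretization of the durations, and the same Bellman recursion with linear value function, whose one-step minimization gives $v_m=g+v_{m-1}+\sqrt{g^2+v_{m-1}/2}$ (with $g$ the gap of the first remaining phase), where $g^2+v_{m-1}/2$ is a perfect square at every step (e.g.\ $\tfrac{a^2}{16}+\tfrac{n(a+n)}{4}=\tfrac{(a+2n)^2}{16}$), so the step you call the main obstacle is a routine induction. The one difference is that you get the one-phase estimate from the Skorokhod representation and Gaussian tails rather than from the paper's Girsanov change of drift. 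Note also that for $a>1$ your union bound over $u$ gives the smaller cost $\min_{s\le t}(h+t/4+\tfrac a4 s)^2/(2s)$ in the $-$ phase once $t>4h/(a-1)$. This is harmless: a late climb costs at least as much as climbing directly from level $h$, and finishing earlier only lowers the later levels, so $I_k$ is unchanged.
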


The motivation to prove such an asymptotic is twofold. Firstly, it will easily provide a proof that the limiting point process is not a Poisson point process as we will see just below. Secondly, it provides strong evidence that the point process $(\mu^{\infty}_{a}(k),\;k\geq 1)$ coincides with $(\hat \mu^{\infty}_{a}(k),\;k\geq 1)$ for $a \geq 0$ since their asymptotic decay rates are identical. As mentioned above, it also confirms that the limits \emph{do not commute} when $a \in (-1,0)$, giving an example where the infinite universal limit captures ``more repulsion'' than the finite-$n$ limit.

An easy way to check that our limiting point process is not a Poisson point process is to look at the behavior of 
\begin{align*}
	\frac{\P[\mathcal{M}_a[\mu,\infty) \geq 2]}{\P[\mathcal{M}_a[\mu,\infty) \geq 1]^2}\,,
\end{align*}
in the large $\mu$ limit, which should be equal to $1/2$ for a Poisson point process. The proposition above directly proves that this is not the case, as indeed, we obtain for $\mu \gg 1$,
\begin{align*}
	\frac{\P[\mathcal{M}_a[\mu,\infty) \geq 2]}{\P[\mathcal{M}_a[\mu,\infty) \geq 1]^2} = \exp(- \mu(1+o(1)) )\,.
\end{align*}

\subsubsection*{Local level repulsion when $a \geq 0$}
Let us conclude with a few words about the \emph{local level repulsion} for $a \geq 0$. Consider the gap between the first and second limiting points, $h_1 := \mu_a^\infty(1) - \mu_a^\infty(2)$. Conditional on the largest eigenvalue being macroscopic, say $\mu_a^\infty(1) \geq \mu_0 >0$, one can investigate the probability that this gap is atypically small, namely $$\P(h_1 \leq \eps \mid \mu_a^\infty(1) \geq \mu_0).$$ 
	
	In classical $\beta$-ensembles, strong local repulsion yields a probability scaling as $O(\eps^{1+\beta})$. Here, however, a back-of-the-envelope computation provided below suggests that this probability scales linearly as $O(\eps)$. This confirms that the strong microscopic repulsion vanishes in the limit, matching the behavior of the independent exponential gaps observed in the finite-$n$ scaling. 
	
	To see this, recall that the first eigenvalue corresponds to $\mu_1 := \mu_a^\infty(1) = \max \{\mu > 0 : \exists t \geq 0,\; r_{\mu}(t) = c_\mu(t)\}$. Let us look at the unconstrained diffusion $r_1 := r_{\mu_1+1}$, which evolves simply as a reflected Brownian motion with drift $-a/4$ without undergoing any resetting procedure (the shift by $+1$ is arbitrary to ensure it never hits its critical line). 
	
	Now, consider the shifted level $\tilde \mu_1 := \mu_1 - \eps$ and its associated path $r_{\tilde \mu_1}$. For the event $\{h_1 \le \eps\}$ to occur, $r_{\tilde \mu_1}$ must complete a full cycle and hit $c_{\tilde \mu_1}$ again. Because the paths are coupled via the same driving Brownian motion, once $r_{\tilde \mu_1}$ completes its short resetting $+$ phase, its trajectory must coincide with $r_1$ (unless when $r_1$ remains trapped within the narrow macroscopic strip $[c_{\mu_1 - \eps}, c_{\mu_1}]$ whose probability is exponentially small). 
	
	This implies that $r_1$ hits its maximal line $c_{\mu_1}$ and then, after a macroscopic amount of time, returns to within a distance $\eps$ of it. By classical path decomposition theorems, the trajectory of a Brownian motion with drift, conditioned on its overall maximum, behaves locally like a Bessel process (with drift). The probability that such a process returns to within a distance $\eps$ of its macroscopic maximum scales exactly as $O(\eps)$.

\subsection*{Acknowledgements} Special thanks are due to Djalil Chafai for many helpful discussions and for his invaluable support throughout this project. L.D. acknowledges the support of ANR RANDOP ANR-24-CE40-3377 and LOCAL ANR-22-CE40-0012.

\section{Strategy of proof}

\subsection{Rescaled diffusions.} \label{subsec:RescaledDiff}

Our approach relies on the characterization of SBO spectrum via a family of coupled  diffusions $(p^{\beta}_\gl,\; \gl \in \R_+^*)$ with initial condition $p^{\beta}_\gl(0)=+ \infty$:
\begin{align*}
	dp^{\beta}_\gl(t) = \frac{2}{\sqrt{\beta}} p^{\beta}_\gl(t) db(t) + \big((a+\frac{2}{\beta})p^{\beta}_\gl(t) - p^{\beta}_{\gl}(t)^2 - \gl e^{-t}\big) dt \,.
\end{align*}
The diffusion $p^{\beta}_\gl$ may explode to $-\infty$; in this case it immediately restarts from $+\infty$.

These diffusions correspond to the Riccati transform $p^\beta_\gl = \psi'/\psi$ of the eigenvalue equation $	\mathfrak{G}_{\beta,a} \psi = \gl \psi$ with initial condition $\psi(0) = 0$, see (1.5) in \cite{RamirezRider2009}.

It is crucial to note that the \emph{same} Brownian motion $b$ drives the whole family of SDEs. It implies important properties such as the monotonicity of the number of explosions of $p^{\beta}_\gl$ (which turns out to be finite). In fact, the number of explosions of $p^{\beta}_\gl$ over $\R_+$ corresponds to $N^{\beta}_\gl$ the counting function of the eigenvalues of the SBO.

We will study the small $\beta$ limit of the family $(p^{\beta}_\gl)$ when $\gl$ is properly rescaled with $\beta$, that is when $\beta \ln (1/\gl)$ is of order $1$. More precisely, we focus on the number of explosion times of $p^{\beta}_\gl$ on $\R_+$.

Let us fix $\mu \in \R$. Notice that when $p^{\beta}_\gl$ reaches $0$, the term in front of the noise vanishes and the drift is negative. It implies that $p^{\beta}_\gl$ never reaches $0$ from below. It is easy to check that the hitting times of $0$ form a discrete point process. The first key idea of our analysis 
is to introduce the rescaled diffusion $q^\beta_\mu(t)$, defined piecewise depending on the sign of $p^{\beta}_{\gl_\beta}(t/(4\beta))$, where $\gl_\beta := \exp(-\mu/\beta)$. When the process is positive, we define $q^-_\mu(t) := - \beta \ln (p^{\beta}_{\gl_\beta}(t/(4 \beta)))$. When it is negative, we define $q^+_\mu(t) := \beta \ln(-p^{\beta}_{\gl_\beta}(t/(4\beta))) + \mu + t/4$.

A simple computation using Itô formula shows that they follow the SDEs:
\begin{align}
	dq^-_\mu &= dW(t) + \frac{1}{4} \Big(- a + \exp(-q^-_\mu(t)/\beta) + \exp(-( t/4 + \mu - q^-_\mu(t))/\beta ) \Big) dt \,, \label{SDEq-}\\
	dq^+_\mu &= dW(t) + \frac{1}{4} \Big((a+1) + \exp(-q^+_\mu(t)/\beta) + \exp(-( t/4 + \mu - q^+_\mu(t) )/\beta ) \Big) dt \,, \label{SDEq+}
\end{align}
where $W$ is a standard Brownian motion obtained by rescaling the initial Brownian motion $b$ (more precisely $W(t) =-\sqrt{4\beta} \, b(t/(4\beta))$ during the $-$ phase and $W(t) =\sqrt{4\beta} \, b(t/(4\beta))$ during the $+$ phase). The diffusions $q_\mu^\pm$ may explode to $-\infty$ in a finite time. By definition, the diffusion $q^{\beta}_\mu$ alternates between $q^+_\mu$ and $q^-_\mu$: it starts to follow $q_\mu^+$ and each time $q^{\beta}_\mu =q_\mu^+$ (resp. $q_\mu^-$) reaches $-\infty$, $q^{\beta}_\mu$ immediately restarts from $+\infty$ and follow $q_\mu^-$ (resp. $q_\mu^+$) .

Let us define the critical line:
\begin{align}\label{criticalline}
	c_{\mu}(t) := \mu + t/4\,,\quad t\geq 0\,.
\end{align}

Roughly, the diffusion $q^-_\mu$ behaves as follows for small values of $\beta$ and after time $0$ or an explosion time of $q^+_\mu$: It starts at $-\infty$ and first quickly goes up to $0$ due to the strong drift term $\exp(-q^-_\mu(t)/\beta)$. Then it spends some time between $0$ and the affine line $c_{\mu}(t)$ where it behaves approximately as a reflected Brownian motion with drift $-a/4$. If it reaches the line $t \mapsto c_{\mu}(t)$ in a finite time then it quickly explodes to $+\infty$ after this hitting time due to the strong drift term $\exp(-( t/4 + \mu)/\beta -q^-_\mu(t) )$ and it immediately restarts from $-\infty$, switching to $q^+_\mu$.

The behavior of the diffusion $q^+_\mu$ is similar except that, it behaves approximately as a reflected Brownian motion with drift $(a+1)/4$ in the region $\{(t,x),\; 0 \leq t,\; 0 \leq x \leq c_\mu(t)\}$. Therefore, it almost surely hits $c_{\mu}(t)$ when $a \geq 0$.

There are two types of explosions for $q^{\beta}_\mu$. Either $q^{\beta}_\mu$ explodes at a time $\xi$ such that $q^{\beta}_\mu(\xi^-) = q^-_\mu(\xi^-)$, which corresponds to the (rescaled) hitting times of $0$ by the initial diffusion $p^{\beta}_{\gl_\beta}$. Alternatively, $q^{\beta}_\mu$ explodes at time $\xi$ such that $q^{\beta}_\mu(\xi^-) = q^+_\mu(\xi^-)$, which corresponds to the (rescaled) explosion times of the initial diffusion $p^{\beta}_{\gl_\beta}$.

In the following, we denote by $\xi^-_{\mu,\beta}(1) < \xi^+_{\mu,\beta}(1) < \xi^-_{\mu,\beta}(2) < \xi^+_{\mu,\beta}(2) < \cdots$ the explosion times of the diffusion $q^\beta_\mu$ and by
\begin{align}\label{explosionsq-}
	\nu^\beta_\mu := \sum_{i \geq 1} \delta_{\xi^+_{\mu,\beta}(i)}\,,
\end{align}
the measure corresponding to the (rescaled) explosions of $p^\beta_{\gl_\beta}$.

From the discussion above, it is natural to expect that as $\beta \to 0$, the trajectory of the diffusion $q^{\beta}_\mu$ converges in law (under a well-chosen topology that smooths out the explosions parts) towards the diffusion $r_\mu$  described in the introduction. Furthermore, notice that the diffusions are all driven by the same underlying Brownian motion $W$ which rigorously motivates our choice of coupling for $r_\mu$. 

\subsubsection{Convergence towards the limiting measures.}

We can now state the desired convergence result:

\begin{proposition}[Convergence of the explosion times]\label{propo:CVexplosions}
	When $\beta \to 0$, the measure $\nu^\beta_\mu$ converges in law to the measure $\nu_\mu$ under the topology of weak convergence. 
\end{proposition}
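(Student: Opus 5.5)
We prove convergence of the explosion times one cycle at a time. Since the measures are a.s. finite and the number of cycles of $r_\mu$ is dominated by a geometric variable, it is enough to show two things. First, for every fixed $k$, the vector $(\xi^-_{\mu,\beta}(i),\xi^+_{\mu,\beta}(i))_{i\le k}$ converges in law to $(\xi^-_\mu(i),\xi^+_\mu(i))_{i\le k}$, where the value $+\infty$ is allowed and compactified. Second, the total mass $\nu^\beta_\mu(\R_+)$ is tight. Weak convergence of the finite measures then follows, because once the explosion times converge, including the event that a given time is infinite, the measures converge weakly.

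\textbf{Step 1: one phase at a time.} We work on a single probability space where every $q^\pm_\mu$ is driven by the same $W$, and we compare with the Skorokhod map. Fix a starting time $s$ and suppose $q^-_\mu$ restarts from $-\infty$ at $s$. Because of the term $\exp(-q/\beta)/4$, the process reaches level $\beta^{1/2}$ (say) within a time $o(1)$ with probability tending to $1$. This is shown by comparison with the deterministic ODE $\dot x = e^{-x/\beta}/4$ plus a small Brownian perturbation.

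After that, on the region $\{\eta\le q\le c_\mu(t)-\eta\}$ the two exponential terms are at most $e^{-\eta/\beta}$. Near $0$, the term $\frac14 e^{-q/\beta}dt$ plays the role of a reflecting push. The standard argument is to write
\begin{align*}
q^-_\mu(t)=Y(t)+\ell_\beta(t),\qquad \ell_\beta(t)=\frac14\int_s^t e^{-q^-_\mu(u)/\beta}du,
\end{align*}
up to the contribution of the upper exponential, where $Y$ is the Brownian motion with drift $-a/4$. We then check two facts: $q^-_\mu\ge -\eta$ with high probability, and $\ell_\beta$ only increases when $q^-_\mu\le\eta$. Lipschitz continuity of the Skorokhod map then gives $\sup_{[s,T]}|q^-_\mu-R^{(-a/4,s)}|\to0$ in probability, up to the first time the process comes within $\eta$ of $c_\mu$. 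The same argument applies to $q^+_\mu$ with drift $(a+1)/4$.

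\textbf{Step 2: explosion near the critical line.} Once $q^-_\mu$ gets within $\eta$ of $c_\mu$, we show it explodes within time $o(1)$ with probability close to $1$. Conversely, if the limit $R$ stays at distance $2\eta$ below $c_\mu$ on $[s,T]$, then $q$ does not explode there. The first claim follows from the strong drift $e^{-(c_\mu-q)/\beta}$, again by comparison with an explosive ODE. We need that $R$ actually crosses into a neighbourhood of the line rather than just touching it, and this holds because Brownian motion hitting a smooth curve crosses it immediately, a.s. Together these give $\xi^\mp_{\mu,\beta}(i)\to\xi^\mp_\mu(i)$ in probability on finite horizons.

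We iterate through the strong Markov property at the restart times. By continuity of the Skorokhod map with respect to the starting time, a small error in a restart time produces only a small error in the next phase.

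\textbf{Step 3: infinite times and tightness (main obstacle).} The delicate part is the behaviour over infinite time horizons. We need that when $\xi_\mu(i)=\infty$, the prelimit time $\xi_{\mu,\beta}(i)$ also goes to infinity, or at least does not remain finite and large, and that no spurious explosions occur at large times. For this we use a uniform-in-$\beta$ bound: after time $T$, the probability that $q^\pm_\mu$ ever comes within $\eta$ of $c_\mu$ is small uniformly in $\beta$, at least in the phases with drift below $1/4$. We get this by dominating $q$ by a reflected Brownian motion with drift $\delta+\epsilon$, plus an error from the lower exponential term that we control away from $0$.

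In the phase with drift $(a+1)/4\ge 1/4$ when $a\ge0$, hitting is a.s. and no such bound is needed. The same domination yields a geometric bound on the number of cycles, uniform in $\beta$, which gives tightness of $\nu^\beta_\mu(\R_+)$. I expect the hardest point to be controlling $q$ near $0$ uniformly over unbounded time intervals, where $e^{-q/\beta}$ acts as reflection. I would first try a comparison theorem for SDEs against an explicit reflected-with-softwall process.
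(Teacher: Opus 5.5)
\textbf{Steps 1--2.} These follow the paper's architecture: fast ascent from $-\infty$, comparison with a reflected Brownian motion on $[0,T]$, fast explosion once the line is reached, and iteration over phases. Replacing the paper's sandwich (reflected processes at levels $\pm\beta^{1/4}$ plus the $s^2$ estimate) by stability of the Skorokhod problem is a legitimate variant. Strictly speaking, you need stability of the $\eta$-approximate problem, which follows from a short direct bound on $\ell_\beta$, not just Lipschitz continuity of the map.

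\textbf{The gap is in Step 3.} This is exactly where the paper locates the main difficulty (Lemma~\ref{tightnessexplosions}). Your uniform claim, that after time $T$ the process never comes within $\eta$ of $c_\mu$, cannot hold without information on the state of $q^\beta_\mu$ at the deterministic time $T$.
\begin{itemize}
\item If $q^\beta_\mu(T)$ is near $c_\mu(T)$, it may explode right after $T$ whatever the drift.
\item If $a\ge 0$ and $q^\beta_\mu$ is in a $+$ phase at time $T$, that phase hits the line almost surely and produces an atom of $\nu^\beta_\mu$ after $T$.
\end{itemize}
So your remark that in the $+$ phase ``no such bound is needed'' is backwards. Being in a $+$ phase at time $T$ is precisely what must be ruled out, or pushed through as the paper does before time $T^3$, allowing explosions in $[T,T^9]$ but none after.

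Your opening reduction is also insufficient as formulated. Convergence of each $\xi$ in $[0,\infty]$ plus tightness of $\nu^\beta_\mu(\R_+)$ is compatible with $\nu^\beta_\mu=\delta_{1/\beta}$ and $\nu_\mu=0$. What is actually needed is $\sup_{\beta\le\beta_0}\P(\nu^\beta_\mu[T,\infty)>0)\to0$ as $T\to\infty$. Your geometric domination of the number of cycles is correct: each $-$ phase restarted from $-\infty$ completes with probability at most $p<1$, uniformly in $\beta$. But it bounds the mass, not where the mass sits.

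\textbf{How to close it.} You have two options.
\begin{itemize}
\item Follow the paper's worst-case analysis over the phase and position at time $T$. Push a $+$ phase through, then trap the $-$ phase under reflected processes with drift $-a/4+\delta<1/4$, started either at time $T$ from level $1$ or at time $2T$ from $c_\mu(T)+3T/16$. Their probability of ever hitting the line is exponentially small in $T$.
\item Exploit your own finite-horizon coupling. With high probability, $q^\beta_\mu(T)$ is within $o(1)$ of $r_\mu(T)$ and in the same phase. For $T$ large, $r_\mu$ is with high probability in its terminal, non-hitting phase, at distance of order $T$ below $c_\mu(T)$. This phase has drift $-a/4$, or $(a+1)/4<1/4$ when $a<0$. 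Dominating $q^\beta_\mu$ from time $T$ by a reflected Brownian motion with drift below $1/4$ then excludes any later explosion.
\end{itemize}

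\textbf{Where the difficulty is not.} For these upper dominations, reflect the comparison process at a positive level (the paper uses $1$). Then the term $e^{-q/\beta}$ plays no role. Controlling $q$ near $0$ over unbounded horizons, which you flag as the hardest point, is therefore not needed; the real issue is the unknown state at time $T$.
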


From our construction, it is straightforward to extend this proposition to the joint law of $$(\nu^\beta_{\mu_1}, \cdots, \nu^\beta_{\mu_k})$$ for fixed positive numbers $\mu_1<\dots<\mu_k$, since the limiting candidates are defined through diffusions driven by the same underlying Brownian motion. This directly implies convergence for the finite-dimensional distributions of the point process $\{\mu_{\beta,a}(i),\; i\geq 1\}$. Let $\mathcal{M}_{\beta,a}  := \sum_{i\geq 1} \delta_{\mu_{\beta,a}(i)}$ denote the measure associated to this point process. Recalling that $\mathcal{M}_{\beta,a}[\mu,+\infty) = \nu^\beta_{\mu}(\R_+)$ and $\mathcal{M}_{a}[\mu,+\infty) = \nu_{\mu}(\R_+)$ for any $\mu >0$, we obtain the following result.
\begin{proposition}[Convergence of finite-dimensional distribution]
	Fix $0 < \mu_1 < \cdots < \mu_k$. 
	When $\beta \to 0$, the random vector 
	\begin{align*}
		\big(\mathcal{M}_{\beta,a}[\mu_1,+\infty), \cdots,  \mathcal{M}_{\beta,a} [\mu_k,+\infty)\big)\,,
	\end{align*}
	converges to 
	\begin{align*}
			\big(\mathcal{M}_{a} [\mu_1,+\infty), \cdots,  \mathcal{M}_{a} [\mu_k,+\infty)\big)\,.
	\end{align*}

\end{proposition}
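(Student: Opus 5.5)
The plan is to derive the statement from the joint version of Proposition \ref{propo:CVexplosions}, which the paper says follows from the construction. That joint version states that for fixed $0<\mu_1<\dots<\mu_k$, the vector $(\nu^\beta_{\mu_1},\dots,\nu^\beta_{\mu_k})$ converges in law to $(\nu_{\mu_1},\dots,\nu_{\mu_k})$ in the product of weak topologies on finite measures on $\R_+$. The key remark is that the total mass map $\nu\mapsto\nu(\R_+)$ is continuous for the weak topology, since it is integration against the bounded continuous function $1$. On the set of integer-valued finite measures this map takes values in $\N\cup\{0\}$ and is locally constant.

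The continuous mapping theorem therefore gives convergence in law of $(\nu^\beta_{\mu_1}(\R_+),\dots,\nu^\beta_{\mu_k}(\R_+))$ to $(\nu_{\mu_1}(\R_+),\dots,\nu_{\mu_k}(\R_+))$. We then use two identities:
\begin{itemize}
\item $\mathcal{M}_{\beta,a}[\mu,\infty)=\nu^\beta_\mu(\R_+)$, because explosions of $p^\beta_{\gl_\beta}$ count the eigenvalues below $\gl_\beta=e^{-\mu/\beta}$, that is, rescaled points $\geq\mu$;
\item $\mathcal{M}_a[\mu,\infty)=\nu_\mu(\R_+)$, which holds by definition.
\end{itemize}
These identities translate the convergence into exactly the claimed statement.

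The real work is justifying the joint extension. I would revisit the proof of Proposition \ref{propo:CVexplosions} and check that it proceeds pathwise, or via a coupling. All $q^\beta_{\mu_j}$ are driven by the same $b$, and hence by the same rescaled $W$. However, the sign convention for $W$ flips between phases, and the phases differ for different $\mu_j$. So I would write each phase in terms of $\sqrt{4\beta}\,b(\cdot/(4\beta))$ with a phase-dependent sign, and note that the limiting $r_{\mu_j}$ are built from a single $B$ in the same way.

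I would then show that, in each phase, the approximation of $q^\pm_\mu$ by a reflected Brownian motion with drift, stopped when it hits $c_\mu$, holds in probability uniformly on compacts given the driving noise. Convergence in probability for each $\mu_j$ separately, on a common probability space, immediately gives joint convergence.

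The main obstacle is handling the hitting times $\xi^\pm_\mu(i)$. Their continuity as functionals of the path requires that the limit path crosses $c_\mu$ transversally and does not merely touch it. For Brownian motion with drift against an affine line this holds almost surely. Together with the almost sure finiteness of the number of cycles, which is dominated by a geometric variable, this makes the hitting-time functional almost surely continuous at the limit.
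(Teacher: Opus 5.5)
Your skeleton is exactly the paper's: joint convergence of $(\nu^\beta_{\mu_1},\dots,\nu^\beta_{\mu_k})$ via the common-noise coupling, continuity of $\nu\mapsto\langle\nu,1\rangle$ for the weak topology, and the identities $\mathcal{M}_{\beta,a}[\mu,\infty)=\nu^\beta_\mu(\R_+)$, $\mathcal{M}_a[\mu,\infty)=\nu_\mu(\R_+)$. The paper settles the joint step in one line (``driven by the same underlying Brownian motion''). You rightly noticed that this is not literally true before the limit. Your repair, however, breaks at the sentence ``the limiting $r_{\mu_j}$ are built from a single $B$ in the same way''.

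They are not built that way. In the definition of $R^{(\delta,s)}$ the unreflected path is $Y^{\delta,s}(t)=\delta(t-s)+(B(t)-B(s))$ in \emph{both} phases, with no sign change. In the prelimit, $\ln|p^\beta_{\lambda_\beta}|$ has martingale part $\frac{2}{\sqrt\beta}\,db$ on either side of $0$, while $q^-=-\beta\ln p$ and $q^+=\beta\ln(-p)+\mu+t/4$. So with $\tilde B(t):=\sqrt{4\beta}\,b(t/(4\beta))$ one gets $dq^-_\mu=-d\tilde B+\cdots$ and $dq^+_\mu=+d\tilde B+\cdots$, as the paper itself records after \eqref{SDEq+}.

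Carried out correctly, your pathwise argument gives joint convergence to the counts of a family $\tilde r_\mu$ with $d\tilde r_\mu=\epsilon_\mu(t)\,d\tilde B(t)+\text{drift}+\text{reflection}$. Here $\epsilon_\mu=-1$ during the $-$ phases and $+1$ during the $+$ phases of $\tilde r_\mu$ itself. For a single $\mu$ this is harmless: by the strong Markov property at the switching times and the invariance of Brownian increments under $B\mapsto-B$, each $\tilde\nu_\mu$ has the law of $\nu_\mu$. But take $\mu_1<\mu_2$. As soon as $r_{\mu_1}$ hits $c_{\mu_1}$ and enters its $+$ phase while $r_{\mu_2}$ is still in its first $-$ phase, the pair is driven by identical noises in the paper's family and by opposite noises in $\tilde r$.

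To reach the statement you would therefore need the finite-dimensional laws of $(\nu_{\mu_j}(\R_+))_j$ to coincide for these two couplings. No symmetry provides this, and a large-deviation heuristic says they differ. Take $a=0$.
\begin{itemize}
\item The optimal path for $\{\nu_\mu(\R_+)\ge 2\}$ (cost $C_3\mu=2\mu$) ends its first $-$ phase at time $4\mu/3$ at height $4\mu/3$. It then runs a $+$ phase of duration $8\mu/3$ along which its driving noise has slope $1/2$.
\item In the paper's coupling the same noise lifts $r_{\lambda\mu}$, still in its $-$ phase, by $2\mu/3$ relative to $c_{\lambda\mu}$. So for $1<\lambda<5/3$ the event $\{\nu_{\lambda\mu}(\R_+)\ge 1,\ \nu_\mu(\R_+)\ge 2\}$ still has probability $e^{-2\mu(1+o(1))}$.
\item In the SBO-induced coupling that noise instead pushes $r_{\lambda\mu}$ down to $0$. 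The joint event then forces a departure from the optimizer and, heuristically, a strictly larger exponent.
\end{itemize}

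So your argument identifies the limit as the sign-flipped family, not as $\mathcal{M}_a$ as defined in the paper. Either you prove an identity in law that appears to be false, or the limiting object must be redefined with the phase-dependent sign; one-dimensional marginals are unaffected. The paper's own one-line justification passes over exactly this point.

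Separately, weak (rather than vague) convergence on $\R_+$ also needs the uniform-in-$\beta$ absence of late explosions (Lemma~\ref{tightnessexplosions}). The a.s.\ finiteness of the limit's cycle count alone does not give this.
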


Theorems \ref{theoCV} and \ref{theoCharact} are a direct consequence of this proposition. We consider the point process as a random measure on the locally compact space $(0,+\infty]$. We endow the space of Radon measures with the vague topology. Note that this is equivalent to vague convergence near $0$ and weak convergence near $+\infty$. More precisely, it corresponds to the topology that makes continuous the maps $m \mapsto \langle f, m \rangle$ for any continuous function $f \; :\; (0,+\infty] \to \R$ supported in some $[\delta,+\infty]$.

Using Kallenberg's tightness condition (see e.g. \cite{Kallenberg}, Lemma 14.15), the family $(\mathcal{M}_{\beta,a})_\beta$ is tight if for all $\delta >0$ and $\eps >0$, there exists $c >0$ such that 
\begin{align*}
	\sup_{0<\beta \leq \beta_0} \P\Big[\mathcal{M}_{\beta,a}([\delta,+\infty)) > c\Big] < \eps\,.
\end{align*}

As $\mathcal{M}_{\beta,a}([\delta,+\infty))$ converges in distribution towards $\mathcal{M}_{a} [\delta,+\infty)$, which is almost surely finite as observed above (we have seen that it is stochastically dominated by a geometric random variable), the condition above is satisfied.

Since the finite-dimensional distributions of any subsequential limit are uniquely identified, we deduce the convergence of the eigenvalue point process stated in Theorem \ref{theoCV} as well as the identification of the limit provided in Theorem \ref{theoCharact}.

\medskip

We conclude this section by outlining the remainder of the paper. Section \ref{CVexplosiontimes} is devoted to the proof of Proposition \ref{propo:CVexplosions}. In subsection \ref{subsec:explosions}, we will first control the first explosion time of the diffusion $q_\mu^\beta$  and deduce the weak convergence of the first $k$ explosions times. Then, in subsection \ref{subsec:tightexplo}, we establish the tightness of the family $(\nu^\beta_\mu)_{\beta >0}$. Finally, in the last section \ref{sec:aympkexplo}, we provide the proof of Proposition \ref{propo:asympkexplo}.

\section{Convergence of the explosion times}\label{CVexplosiontimes}

The goal of this section is to provide a proof of the convergence result stated in Proposition \ref{propo:CVexplosions}. We will first control the explosion times until some large fixed time $T$. 

\subsection{Convergence of the explosion times until a fixed time $T$}\label{subsec:explosions}

In this subsection, we fix $\mu >0$. Let us first consider the first explosion time $\xi_1 := \xi_{\mu,\beta}^-(1)$ of the diffusion $q^\beta = q_\mu^\beta$. By definition $q^\beta(0) = -\infty$ and $q^\beta(t) = q^-(t)$ follows the SDE \eqref{SDEq-} until this first explosion time. 

We define its first hitting time of $0$ by $\tau_0$ and its first hitting time of the critical line $c_\mu$ by $\tau_\mu$.

We decompose the trajectory of $q^-$ into three phases: 
\begin{itemize}
	\item[(A)] \textbf{Ascent from $-\infty$.} First it reaches the axis $x= 0$ in a short time. 
	\item[(B)] \textbf{Diffusion.} Then it spends some time of order $O(1)$ in the region $[0,c_\mu(t)]$, behaving essentially like a reflected Brownian motion (above $0$) with a constant drift $-a/4$. 
	\item[(C)] \textbf{Explosion to $+\infty$.} If it reaches the critical line $t \mapsto c_\mu(t)$ then it will explode with high probability within a short time.
\end{itemize}

In the next lemma, we examine the phases $(A)$ and $(C)$. 

\begin{lemma}[Ascent from $-\infty$ and explosion to $+\infty$]\label{lem:ascentexplo}
With probability going to $1$ as $\beta \to 0$, we have $\tau_0 \leq \beta$ and on the event $\{\tau_\mu < +\infty\}$, $\xi_1 - \tau_\mu \leq \beta$.
\end{lemma}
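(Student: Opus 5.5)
The plan is to compare $q^-$ with explicit deterministic ODE solutions driven by the dominant exponential terms, and to control the Brownian fluctuation over time windows of length $\beta$. Fix a small exponent, say $\beta^{1/3}$, and work on the event $E_\beta := \{\sup_{s\le T,\,h\le \beta}|W(s+h)-W(s)| \le \beta^{1/3}\}$ for a large fixed $T$. By Lévy's modulus of continuity, $\P(E_\beta)\to 1$. Since $\tau_\mu<\infty$ only matters on events where it is bounded with high probability (we may intersect with $\{\tau_\mu\le T\}$ and then let $T\to\infty$ at the end), it suffices to work on $E_\beta$ and pathwise.

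\textbf{Phase (A).} Write $q^-(t) = W(t) + y(t)$, where $y' = \frac14(-a + e^{-(W+y)/\beta} + \dots)$. As long as $q^-\le 0$, the last exponential term is nonnegative and can be dropped to obtain a lower bound. So $y$ dominates the solution of $z' = -a/4 + \frac14 e^{-(z+\beta^{1/3})/\beta}$, started from $-\infty$. This ODE is explicit: $e^{z/\beta}$ solves a linear equation. The time needed to go from $-\infty$ to level $-2\beta^{1/3}$ is
\[
\int_{-\infty}^{-2\beta^{1/3}} \frac{4\,dz}{e^{-(z+\beta^{1/3})/\beta}-a} \approx 4\beta\, e^{-\beta^{-2/3}} \ll \beta .
\]
Hence $q^-$ reaches $-\beta^{1/3}$ well before time $\beta$. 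Strictly, we need $q^-$ to reach $0$. So we extend the argument: on the strip $[-\beta^{1/3},0]$ the drift is at least $\frac14(e^{0}-a)$-ish. More carefully, with $u:=q^-$ we have $e^{-u/\beta}\ge 1$, and over a window of length $\beta$ the drift integral is at least $\int e^{-u/\beta}/4$, which is huge unless $u$ sits near $0$. A cleaner route is a comparison/stopping argument. If $q^-<0$ on $[0,\beta]$, then the drift exceeds $\frac14(1-a)$ throughout. More to the point, $\int_0^\beta e^{-q^-/\beta}\,ds$ would have to be bounded by $4(|q^-(\beta)-q^-(\beta/2)|+\dots)$. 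Applying the explicit ODE comparison on $[\beta/2,\beta]$, shifted by at most $\beta^{1/3}$ of noise, then forces $q^-$ above $-\beta^{1/3}$ and hence, using the noise excursions, to hit $0$. I expect this last step to need the main care: noise of size $\beta^{1/3}\gg\beta$ means a pure ODE comparison only yields $q^-\ge -O(\beta^{1/3})$. The fix I would try first is to use that, conditional on being at $-\beta^{1/3}$, a Brownian motion with nonnegative drift hits $0$ within time $\beta$ with probability $\to 1$, since $\beta^{1/3}$ is replaced by the finer scale. Concretely, I would redo the comparison with $\beta^{1/2}\sqrt{\log(1/\beta)}$ noise windows and use the fact that $\P(\sup_{[0,\beta/2]} W \ge \beta^{1/2}\sqrt{\log(1/\beta)}\cdot\epsilon)\to1$ fails. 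So instead I use the drift: at level $-\beta\log(1/\beta)$ the drift is of order $1/\beta$, which moves $q^-$ by $\gg \sqrt{\beta}$ in time $\beta/2$.

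\textbf{Phase (C).} The argument is symmetric: set $d(t):=c_\mu(t)-q^-(t)$. Then $d$ starts at $0$ at time $\tau_\mu$ and satisfies $dd = -dW + \frac14(1+a - e^{-q^-/\beta} - e^{-d/\beta})dt$. Near $d\approx 0$ we have $q^-\approx \mu>0$, so $e^{-q^-/\beta}$ is negligible on a window of length $\beta$. The same explicit comparison then applies, with $e^{-d/\beta}$ now pushing $d$ to $-\infty$. This is exactly explosion of $q^-$ to $+\infty$ in finite time. The explicit Riccati-type ODE $d'=-\frac14 e^{-d/\beta}$ blows up from level $d_0$ in time $4\beta e^{d_0/\beta}$. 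This is $\le\beta/2$ once $d_0\le -\beta\log 2$, and by the strong Markov property at $\tau_\mu$ the first $\beta/2$ of time brings $d$ there with probability $\to1$, by the same drift-versus-noise argument as in (A). Uniformity over $\tau_\mu\le T$ follows from the strong Markov property, since the estimate does not depend on the starting time.
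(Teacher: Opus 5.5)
There is a genuine gap, and it sits exactly at the step you flagged. The drift contains factors of the form $e^{-W/\beta}$, so a noise window of length $\beta$ with increments up to $\beta^{1/3}\gg\beta$ is far too coarse. With it, the ODE comparison only brings $q^-$ up to $-O(\beta^{1/3})$, and none of your repairs bridges the rest.

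\begin{itemize}
\item Your claim that a Brownian motion with nonnegative bounded drift started at $-\beta^{1/3}$ hits $0$ within time $\beta$ is false, since $\beta^{1/3}\gg\sqrt\beta$.
\item Your final ``use the drift'' argument only works while $q^-\le -C\beta\log(1/\beta)$. Closer to $0$ the drift $\frac14(e^{-q^-/\beta}-a)$ is $O(1)$. For $a>1$ it is even essentially nonpositive on $[-\beta\ln a,0]$, so the drift can never carry $q^-$ up to $0$. On that last stretch only the noise, of size $\sqrt\beta$ over time $\beta$, can do the job, and you never invoke it at the right scale.
\item Phase (C) has the same defect. From $d_0=-\beta\log 2$, the blow-up time $4\beta e^{d_0/\beta}$ of $d'=-\frac14 e^{-d/\beta}$ is of order $\beta$. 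Over such a window the $-dW$ term moves $d$ by about $\sqrt\beta\gg\beta$, so the deterministic comparison is not justified from that starting level.
\end{itemize}

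The missing idea is a two-scale argument.

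\textbf{Step 1 (short-window ODE comparison).} Run the comparison on a window so short that the noise is $o(\beta)$. On $\{\sup_{[0,\beta^5]}|W|\le\beta^2\}$, which has probability tending to $1$, you have $e^{-(W(t)-at/4)/\beta}\ge 1/2$. Hence $q^-(t)-W(t)+at/4$ dominates $g(t)=\beta\ln(t/(8\beta))$, and $q^-$ is above $-5\beta\log(1/\beta)$ by time $\beta^5$.

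\textbf{Step 2 (Brownian comparison).} Both exponential drift terms are nonnegative. So from that stopping time on, $q^-$ dominates pathwise (same noise) a Brownian motion with drift $-a/4$. That Brownian motion crosses the remaining gap $5\beta\log(1/\beta)\ll\sqrt\beta$ within time $\beta/2$ with probability tending to $1$.

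\textbf{Phase (C).} This is the mirror image. The same Brownian lower bound first carries $q^-$ from $c_\mu$ to $c_\mu+5\beta\log(1/\beta)$ within time $\beta/2$. Only then does the ODE comparison, on a window of length $O(\beta^5)$ where the noise is negligible, give the explosion.

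\textbf{The event $\{\tau_\mu<\infty\}$.} Handle it with the strong Markov property at $\tau_\mu$ directly, since the resulting bound does not depend on $\tau_\mu$. Do not restrict to $\{\tau_\mu\le T\}$ and send $T\to\infty$. That route needs a bound on $\P(T<\tau_\mu<\infty)$ uniform in $\beta$, which is essentially the tightness proved later using this very lemma.
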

\begin{proof}
Let us first examine the \emph{ascent from $-\infty$} of the diffusion $q^-$.
Notice that $q^-$ is stochastically bounded from below (until its first explosion time) by the solution $\tilde q^-$ of the SDE:
\begin{align*}
	d \tilde q^-(t) = dW(t) + \frac{1}{4} \big(- a + \exp(-\tilde{q}^-(t)/\beta)\big) dt\,,\quad \tilde q^-(0) = -\infty\,.
\end{align*}

Define $\hat{q}^-(t) := \tilde q^-(t) - W(t) +a t /4$. It solves the random ODE:
\begin{align*}
	d\hat{q}^-(t) = \frac{1}{4} \exp(-\hat{q}^-_\mu(t)/\beta) \exp(-(W(t) - at/4)/\beta) dt
\end{align*}

For any $c >0$, let $g$ denote the solution of the deterministic ODE $g'(t) = c\, \exp(-g(t)/\beta)$ with initial condition $g(0) = -\infty$. It is given by $g(t) = \beta \ln(\frac{c}{\beta} t)$, which is increasing with respect to the parameter $c$.

Define the event 
$\mathcal{E}_0 := \{\forall t \in [0,\beta^5],\;|W(t)| \leq \beta^2\}$, whose probability goes to $1$ as $\beta \to 0$. 
On this event, we have the following lower bound:
\begin{align*}
	\exp(-(W(t) + at/4)/\beta)/4 \geq \exp(- (\beta^2 + |a| \beta^5/4)/\beta)/4 > 1/8\,,
\end{align*}
when $\beta$ is small enough.

Therefore, on $\mathcal{E}_0$, for sufficiently small $\beta$, the diffusion $\tilde q^-$ rises faster than the deterministic solution with parameter $c=1/8$, reaching the level $5 \beta \ln \beta$ before time $\beta^5$. 
After reaching $5 \beta \ln \beta$, the exponential drift remains positive, so $\tilde q^-$ is bounded from below by a Brownian motion with drift $-a/4$ starting at $5 \beta \ln \beta$. This Brownian motion reaches $0$ with probability going to $1$ within any time interval $\gg \beta^2 (\ln \beta)^2$ (note that the drift plays a negligible role here since we are dealing with tiny intervals). We conclude that $\tau_0 \leq \beta$.

\medskip

For the \emph{explosion after $\tau_\mu$}, the proof follows the same lines. We first compare the diffusion with a Brownian motion with drift to show it reaches $c_\mu(\tau_\mu) + 5 \beta \ln \beta$ (starting from time $\tau_\mu$) within a time smaller than $\beta/2$. Then we bound from below our diffusion by 
\begin{align*}
d\tilde q^-(t) &= dW(t) + \frac{1}{4} \big(- a + \exp(-( t/4 + \mu -\tilde q^-(t))/\beta  ) \big) dt\,.
\end{align*}
Comparing again this diffusion with the deterministic solution of the ODE 
\begin{align*}
	g'(t) = c \exp(- (c_\mu(\xi_\mu)-g(t))/\beta)\,,
\end{align*}
with $g(0) = c_\mu(\tau_\mu) + 5 \beta \ln \beta$. By working on the high probability event where the Brownian motion $(W(t) - W(\tau_\mu),\; t \geq \tau_\mu)$ does not reach high values, we can pick some well chosen constant $c$ such that this deterministic lower bound explodes within a time $\beta^5$, concluding the proof.
\end{proof}

We now focus on the region between the line $x=0$ and the critical line $t \mapsto c_\mu(t)$ i.e. phase $(B)$. To simplify the proof, we couple the diffusions under consideration. Here the coupling is quite simple: the Brownian motion driving $q^-$ in \eqref{SDEq-} and the one driving its limiting counterpart $r^-$ have to be equal.

\begin{proposition}[Comparison with a reflected Brownian motion]\label{propo:compdiffreflectedBM}
	Fix $T >1$ (independent of $\beta$). With probability going to $1$ when $\beta \to 0$, we have that for all $t \in [\tau_0, \tau_\mu \wedge T]$, 
	\begin{align*}
		|q^-(t) - r^-(t)| \leq \beta^{1/8}\,.
	\end{align*}
\end{proposition}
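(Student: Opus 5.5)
We prove the comparison by a Skorokhod-map argument. We write $q^-$ as a driving path plus a non-decreasing "pushing" term, and show that this term is uniformly close to the Skorokhod regulator of the same driving path. Here $r^-(t) = R^{(-a/4,0)}$ is started at time $\tau_0$ from $0$ and driven by the same $W$ (the discrepancy at time $\tau_0$ is at most of order $\beta$ by Lemma \ref{lem:ascentexplo}). On $[\tau_0,\tau_\mu\wedge T]$ we write
\begin{align*}
q^-(t) = Y(t) + A(t) + E(t), \qquad Y(t) := W(t)-W(\tau_0) - \tfrac{a}{4}(t-\tau_0),
\end{align*}
where $A(t) := \frac14\int_{\tau_0}^t e^{-q^-(s)/\beta}ds$ is non-decreasing and $E(t) := \frac14\int_{\tau_0}^t e^{-(c_\mu(s)-q^-(s))/\beta}ds$ is the upper-boundary push. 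For $r^-$ we have exactly $r^- = Y + L$ with $L(t)=\sup_{\tau_0\le u\le t}\max(0,-Y(u))$.

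The key steps, in order, are as follows.

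\textbf{Step 1 (lower barrier).} With high probability, $q^-(t) \ge -\beta^{1/2}$ on the whole interval. We compare with the reflecting mechanism: whenever $q^-$ goes below $-\eta$, the drift $\frac14 e^{\eta/\beta}$ is enormous. A standard comparison with a Brownian motion with such drift shows that excursions below $-\eta$ are impossible within time $T$, except on an event of probability $O(T e^{-c\, e^{\eta/\beta}\eta})$. A union bound over a grid of mesh $\beta^{10}$ suffices, using Lévy's modulus of continuity for $W$ on $[0,T]$ (increments $\le \beta^{4}$ on such intervals with high probability).

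\textbf{Step 2 (the regulator is nearly the Skorokhod one).} Set $\eta=\beta^{1/2}$. The measure $dA$ charges only times where $q^- \le \eta$, up to a total mass $\frac T4 e^{-\eta/\beta}$ coming from times where $q^->\eta$, which is negligible. Hence $(q^-+\eta',A)$ nearly solves the Skorokhod problem for $Y+E$ at level $-\eta$. The Lipschitz property of the Skorokhod map in sup norm (constant $2$) then gives
\begin{align*}
\sup_{[\tau_0,\tau_\mu\wedge T]}|q^- - r^-| \le 2\sup|E| + 4\eta + T e^{-\eta/\beta} + O(\beta).
\end{align*}
To make this precise we use the explicit formula $A(t) \approx \sup_{u\le t}\max(0,-Y(u)-E(u))$ up to errors $\eta$. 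This follows from two observations. First, $q^-\ge -\eta$ gives $A\ge \sup(-Y-E)-\eta$. Second, at the last time before $t$ at which $A$ increased substantially, $q^-\le \eta$.

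\textbf{Step 3 (upper push negligible).} We bound $E$ before $\tau_\mu$. The integrand is tiny except when $q^-$ is within $\beta^{1/4}$ of $c_\mu$, so we need to control the time spent in that strip; there the integrand is at most $1$. This is the main obstacle: $q^-$ may approach $c_\mu$ closely without touching it. I would control it by comparison, noting that $E$ only pushes $q^-$ upward, toward $c_\mu$. Before $\tau_\mu$ we have $q^-<c_\mu$ and so the integrand is $\le 1$. Splitting $\{c_\mu - q^- \ge \beta^{1/4}\}$ (contribution $\le T e^{-\beta^{-3/4}}$) from the strip gives
\begin{align*}
|E| \le T e^{-\beta^{-3/4}} + \tfrac14\,\mathrm{Leb}\{s\le T: c_\mu(s)-q^-(s)<\beta^{1/4}\}.
\end{align*}
The occupation time of a width-$\beta^{1/4}$ strip by a semimartingale with bounded-below martingale part is $O(\beta^{1/4})$ in probability, via an occupation-times/local-time bound; a Tanaka formula for $(c_\mu-q^-)$ gives $\mathbb E[\text{occupation}]\lesssim \beta^{1/4}\sqrt T$. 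Markov's inequality then yields $|E|\le \beta^{1/8}/4$ with probability $1-O(\beta^{1/8})$. Combining with Step 2 and choosing constants gives $|q^--r^-|\le\beta^{1/8}$.
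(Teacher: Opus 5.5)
Your argument is correct, but it follows a different route from the paper.

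\textbf{How the paper argues.} The paper never writes down the regulator. It sandwiches both $q^-$ and $r^-$ between two auxiliary reflected Brownian motions, with $\delta_1=\beta^{1/4}$:
\begin{itemize}
\item $r_2$, reflected at $-\delta_1$ with drift $-a/4$;
\item $r_1$, reflected at $+\delta_1$ with drift $-a/4+\eps_1$, where $\eps_1=e^{-\delta_1/\beta}/2$ absorbs both exponential terms as long as $q^-$ stays in $[\delta_1,c_{\mu-\delta_1}(t)]$.
\end{itemize}
The lower barrier comes from a scale-function computation, which your modulus-of-continuity argument at $-\beta^{1/2}$ replaces. The gap $r_1-r_2$ is controlled by the elementary inequality $s(t)^2\le 2\eps_1\int_0^t s(u)\,du$, which gives $|r_1-r_2|\le \eps_1T+2\delta_1$. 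This comparison only holds up to $\tau_{\mu-\delta_1}$. The paper therefore adds a separate short-time patch: $q^-$ reaches $c_\mu$ within time $\beta^{3/8}$, and over that time $r^-$ moves by less than $\beta^{1/8}/2$.

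\textbf{What your route changes.} Your exact decomposition $q^-=Y+A+E$, together with the two approximate-Skorokhod observations, reduces everything to the single quantity $E(\tau_\mu\wedge T)$. The occupation-time/Tanaka estimate then controls the upper push all the way to $\tau_\mu$ in one step. You need no stopping at $\tau_{\mu-\delta_1}$ and no hitting-time estimate near the line. The price is local-time machinery and a failure probability of order $\beta^{1/8}$ from Markov's inequality. The paper uses only comparison principles and a one-line ODE inequality, but needs the two-stage argument.

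\textbf{Three small points to fix when writing it out.}
\begin{enumerate}
\item In the paper, $r^-$ starts at $0$ at time $0$, not at $\tau_0$. Its value at $\tau_0\le\beta$ is therefore of order $\sqrt{\beta}$ up to logarithms, not $O(\beta)$. This is harmless: the Skorokhod map is non-expansive in the initial condition, and $\sqrt{\beta}\ll\beta^{1/8}$.
\item The uniform bound on $\mathbb{E}[L^x_{\tau_\mu\wedge T}]$ from Tanaka requires the total variation of the drift of $c_\mu-q^-$ to be integrable, and that variation contains $A$. This follows from $A(t)\le c_\mu(t)-Y(t)$ for $t\le\tau_\mu$, since $E\ge 0$ and $q^-<c_\mu$. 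Consequently the constant depends on $\mu$ and $T$, not only on $\sqrt{T}$.
\item In Step 1, the overall failure probability is $o(1)$, because it comes from the modulus-of-continuity event. It is not $O(Te^{-c\,e^{\eta/\beta}\eta})$.
\end{enumerate}
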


\begin{proof}

Note that $\tau_0 < \beta$ with probability going to $1$ thanks to the previous lemma; we work on this event in the following.

	Let $\delta_1 := \beta^{1/4}$.  We first consider the diffusions up to time $\tau_{\mu - \delta_1}$. We first define two auxiliary diffusions $r_1$ and $r_2$ such that, on an event with probability tending to 1, as $\beta \to 0$, the processes $q^-$ and $r^-$ are squeezed between them:
	\begin{align}
\forall t \in [\tau_0, \tau_{\mu-\delta_1} \wedge T],\quad	r_2(t) \leq	q^-(t) \leq r_1(t),\qquad r_2(t) \leq r^-(t) \leq r_1(t)\,.\label{orderingr1r2}
	\end{align}
	
	Let us start with the \emph{upper bound}.
	Define $\eps_1 :=  \exp(- \delta_1/\beta)/2$, and let $r_1$ be a Brownian motion with drift $-a/4 + \eps_1$, reflected at $\delta_1$, with starting point $r_1(\tau_0) := \delta_1$ driven by the same Brownian motion as $r^-$ and $q^-$. 
	
	In the time interval $[\tau_0, \tau_{\mu - \delta_1}]$, $q^-$ is bounded from above by $r_1$.
	Observe indeed that the drift induced by the two exponential terms in \eqref{SDEq-} in the region $\{(t,x),\; t \geq \tau_0,\; x \in [\delta_1, c_{\mu-\delta_1}(t)]\}$ is bounded from above by $\eps_1$.
	
Moreover, we also have $r^-(t) \leq r_1(t)$ for all $t \geq \tau_0$ on the event $\{\tau_0 \leq \beta,\; r^-(\tau_0) \leq \delta_1\}$ (which has probability tending to $1$).
	
	\medskip
	
	For the \emph{lower bound}, let us define $r_2$ starting from $-\delta_1$ at time $\tau_0$, following a Brownian motion with drift $-a/4$ reflected at $-\delta_1$ driven by the same Brownian motion. As long as $q^-$ does not reach $-\delta_1$, the diffusion $r_2$ is a lower bound for $q^-$. 
	
	We must verify that the probability of $q^-$ reaching $-\delta_1$ before $\tau_{\mu}$ tends to $0$. This is guaranteed by the strong positive drift of $q^-$ strictly below $0$. More precisely, consider the stationary diffusion starting at $0$, satisfying
	\begin{align*}
		d \tilde q(t) = (-a/4 + \exp(-\tilde q(t)/\beta)/4) dt + dW(t)\,.
	\end{align*}
Since $\tilde q$ has a smaller drift than $q^-$, it is more likely to hit $-\delta_1$. We compute the probability that $\tilde q$ hits $-\delta_1$ before $\mu$ using the scale function.
	\begin{align*}
	\P[\tilde q \; \mbox{ hits }- \delta_1 \mbox{ before }\mu] &=	\frac{\int_0^\mu \exp(- 2  \int_0^y (-a/4 + e^{-x/\beta}/4) dx)dy}{\int_{-\delta_1}^{\mu} \exp(- 2 \int_0^y (-a/4 + e^{-x/\beta}/4) dx) dy} \,.
	\end{align*}
	Splitting the integral in the denominator into its integral from $-\delta_1$ to $0$ and from $0$ to $\mu$, one can see that the numerator term is of order $O(1)$ whereas the denominator grows exponentially as $\beta \to 0$. Thus this probability tends to $0$.

	It is also immediate to check that $r^-$ is bounded below by $r_2$ as long as $r^-(\tau_0) \geq -\delta_1$, which happens with probability going to $1$.
	
	\medskip
	
Gathering the bounds, with probability going to $1$, we have the inequalities \eqref{orderingr1r2}.
	
	\medskip
	
Finally, we bound the difference between $r_1$ and $r_2$. Define the shifted space/time versions $\tilde r_1(t) := r_1(t + \tau_0) - \delta_1$ and $\tilde r_2(t) := r_2(t + \tau_0) + \delta_1$ of $r_1$ and $r_2$ so that both are now reflected above $0$ and start at $0$ at time $t=0$. Let $s(t) :=  \tilde r_1(t) - \tilde r_2(t) \geq 0$ be their difference. We have:
	\begin{align*}
		d s(t) = \eps_1 dt + dL_1(t) - dL_2(t)
	\end{align*}
	where $L_1$ and $L_2$ are the local time at $0$ of $\tilde r_1$ and $\tilde r_2$ respectively.
	It gives:
	\begin{align*}
		d(s^2(t)) = 2  s(t) (\eps_1 dt + dL_1(t) - dL_2(t))
	\end{align*}
	
Note that $L_1$ increases only when $\tilde r_1 (t) = 0$, implying $s(t) = \tilde r_2(t) \geq 0$, thus $2 s(t) dL_1(t) \leq 0$. Similarly, $-s(t) dL_2(t) \leq 0$. Therefore:
\begin{align}
s(t)^2 \leq 2 \eps_1 \int_0^t s(u) du \,. \label{ineqs2}
\end{align}

If we define $S(t) := \int_0^t s(u) du$. Since $s(t) = S'(t) \geq 0$, we have $(S'(t))^2 \leq 2 \eps_1 S(t)$, which implies $S'(t) \leq \sqrt{2 \eps_1} \sqrt{S(t)}$. Using Bihari LaSalle inequality\footnote{One could avoid using this inequality by introducing the time $t_0 := \inf\{u \geq 0, \; S(u) >0\}$ which leads to $\frac{d}{dt} \sqrt{S(t)} \leq  \sqrt{2 \eps_1}/2$ for $t \geq t_0$.}, we obtain for $t \geq 0$,
\begin{align*}
	s(t) \leq \eps_1 t\,.
\end{align*}
Returning to the original processes for $t \geq \tau_0$, we have 
\begin{align*}
	|r_1(t) - r_2(t)| \leq |\tilde r_1(t-\tau_0) - \tilde r_2(t-\tau_0) + 2 \delta_1| \leq \eps_1 T + 2 \delta_1\,.
\end{align*}
Since $\delta_1 = \beta^{1/4}$ and $\eps_1 =  \exp(-\beta^{-3/4})/2$, this quantity is bounded by $2\beta^{1/4}$ for small enough $\beta$. Combined with the ordering \eqref{orderingr1r2}, this concludes the proof up to the time $T \wedge \tau_{\mu - \delta_1}$.
	
	It remains to extend this bound up to the time $\tau_{\mu}$. Suppose that $\tau_{\mu - \delta_1} < T$. By comparing $q^-$ with a Brownian motion with constant drift $-a/4$, we see that $q^-$ hits the affine line $c_{\mu}$ within an additional time of order $\beta^{3/8} \gg \delta_1^2$ with probability going to $1$ (note that the drift is irrelevant over such a short interval). Over a time interval $\beta^{3/8}$, the process $r^-$ fluctuates less than $\beta^{1/8}/2 \gg \beta^{3/16}$ with overwhelming probability. This yields the desired bound up to time $\tau_\mu \wedge T$.
\end{proof}

Note that the exact same proofs as Lemma \ref{lem:ascentexplo} and Proposition \ref{propo:compdiffreflectedBM} can be applied for the regime $q^\beta = q^+$ as we did not use the specific value of the drift $-a/4$. In particular, we deduce the following proposition:
\begin{proposition}\label{propo:controlexplokT}
	Fix $k \geq 1$ and $\mu >0$. With probability going to $1$, we have: 
	\begin{align*}
	\forall i \in \{1,\cdots,k\},\quad |	\xi^{\pm}_{\mu,\beta}(i) \wedge T -\xi^{\pm}_\mu(i) \wedge T | \leq \beta^{1/8^{2k}}\,.
	\end{align*}
\end{proposition}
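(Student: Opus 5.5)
We argue by induction on the index of the phase, treating the sequence of explosion times $\xi^-_{\mu,\beta}(1)<\xi^+_{\mu,\beta}(1)<\xi^-_{\mu,\beta}(2)<\cdots$ as $2k$ successive phases. Each phase is handled by Lemma \ref{lem:ascentexplo} and Proposition \ref{propo:compdiffreflectedBM}, applied with drift $-a/4$ or $(a+1)/4$. Since neither proof used the value of the drift, both hold verbatim for $q^+$. The strong Markov property of the driving Brownian motion $W$ at the explosion times lets us restart the argument at the beginning of each phase. Writing $\eta_j$ for the error after $j$ phases, the target is a recursion of the form $\eta_{j+1}\le \eta_j^{1/8}$ up to constants, starting from $\eta_1=\beta^{1/8}$. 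Iterating $2k$ times gives an exponent of at least $1/8^{2k}$, which matches the statement.

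\textbf{Base phase.} For the first phase, Lemma \ref{lem:ascentexplo} and Proposition \ref{propo:compdiffreflectedBM} give, with probability tending to $1$, $\tau_0\le\beta$ and $|q^--r^-|\le\beta^{1/8}$ on $[\tau_0,\tau_\mu\wedge T]$. We must convert this uniform closeness of paths into closeness of the hitting times of $c_\mu$. If $r^-$ hits $c_\mu$ at time $\xi^-_\mu(1)<T$, then just after hitting it crosses strictly above the line: by the law of the iterated logarithm, Brownian motion crosses $c_\mu$ immediately. So within time $\beta^{1/8}$ after $\xi^-_\mu(1)$, $r^-$ exceeds $c_\mu+\beta^{1/8}$ with high probability, hence so does $q^-$. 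Lemma \ref{lem:ascentexplo} then makes $q^-$ explode within a further time $\beta$. Conversely, if $q^-$ reaches $c_\mu$ at some time before $T$, then $r^-$ came within $\beta^{1/8}$ of $c_\mu$. To conclude that $r^-$ actually hits $c_\mu$ shortly afterwards, we use a continuity estimate: for a reflected Brownian motion with drift, the event of coming within $\eta$ of the line $c_\mu$ before $T$ without hitting it within an additional time $\eta^{1/2}$ has probability $o(1)$ as $\eta\to0$. This holds because the maximum of $r^--c_\mu$ over $[0,T]$ has a continuous law with no atom at $0$, and because Brownian oscillation over time $\eta^{1/2}$ is much larger than $\eta$. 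The case $\xi^-_\mu(1)\ge T$ is absorbed by the truncation $\wedge T$, using that the law of $\xi^-_\mu(1)$ has no atom at $T$. Altogether $|\xi^-_{\mu,\beta}(1)\wedge T-\xi^-_\mu(1)\wedge T|$ is at most a power of $\beta$ such as $\beta^{1/16}$, with high probability.

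\textbf{Inductive step.} Suppose the explosion times of the $j$-th phase differ by $\eta_j$. The next phase of $q^\beta$ restarts from $-\infty$ at time $\xi_{\mu,\beta}$ and reaches $0$ within time $\beta$. The limit process $r_\mu$ restarts at $0$ at the nearby time $\xi_\mu$. The two reflected processes are driven by the same $W$ but started at times that differ by $\eta_j$. We want to rerun the squeezing argument of Proposition \ref{propo:compdiffreflectedBM}, now with $\delta_1$ replaced by a quantity comparable to $\eta_j^{1/4}$. This quantity must dominate the Brownian oscillation over the time window of length $\eta_j$, which is of order $\eta_j^{1/2}\log$, with high probability.

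\textbf{Main obstacle: the coupling.} The delicate point, and the one we expect to be hardest, is that the Skorokhod reflection maps started at different times produce different paths. Since both paths start at $0$, the Skorokhod map is Lipschitz in sup norm (with constant $2$) with respect to the driving path. Over a common interval, a starting-time discrepancy of $\eta_j$ therefore costs only the oscillation of $W$ on a window of length $\eta_j$. This yields path closeness of order $\eta_j^{1/8}$ on the phase, and the base-phase hitting argument then gives $\eta_{j+1}\lesssim \eta_j^{1/8}$. Finally, we take a union bound over the $2k$ phases, each of which fails with probability $o(1)$, which concludes the induction.
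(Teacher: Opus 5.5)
Your proposal is correct and follows essentially the paper's proof. Like the paper, you induct over the $2k$ alternating phases. You convert the path closeness of Proposition \ref{propo:compdiffreflectedBM} into closeness of hitting times using the strong Markov property and the fact that a process coming within $\eta$ of the critical line hits it within any time $\gg\eta^2$; the paper phrases this as a sandwich between the lines $c_{\mu-2\beta^{1/8}}$, $c_{\mu-\beta^{1/8}}$ and $c_\mu$. At each restart you rerun the squeeze with $\delta_1$ adapted to the start-time discrepancy, which gives $\eta_{j+1}\lesssim\eta_j^{1/8}$, exactly as the paper does.

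The only blemish is bookkeeping. Waiting a time $\eta^{1/2}$ after the near-approach, instead of a time of order $\eta$ (the paper waits $\beta^{1/8}/4$), is what degrades your base step to $\beta^{1/16}$. That contradicts your announced $\eta_1=\beta^{1/8}$, and the constants in the recursion need a little slack in the exponents to reach the stated $\beta^{1/8^{2k}}$. This is harmless, since only $\eta_{2k}\to 0$ is used later, and your Skorokhod--Lipschitz comparison would in fact give a better rate than $\eta_j^{1/8}$.
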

\begin{proof}
	Note that our control on the trajectories of Proposition \ref{propo:compdiffreflectedBM} is a priori not sufficient to control hitting times as two trajectories can be very close in uniform norm but have well-separated hitting times of an affine line. 	
	It nevertheless gives that if $\tau_\mu \leq T$, $|q^{-}(\tau_\mu) - r^-(\tau_\mu)| \leq \beta^{1/8}$. Therefore, $r^-$ should hit $\mu - \beta^{1/8}$ before $\tau_\mu$. Moreover, if again this hitting time $\xi^{-}_{\mu - \beta^{1/8}}(1)$ is smaller than $T \wedge \tau_\mu$, we have that $q^-$ should hit $\mu - 2 \beta^{1/8}$ before $\xi^{-}_{\mu}(1)$. This gives:
	\begin{align*}
		\tau_{\mu - 2 \beta^{1/8}}  \wedge T \leq  \xi^-_{\mu - \beta^{1/8}}(1)  \wedge T \leq \tau_{\mu} \wedge T\,.
	\end{align*}
But the probability that $\tau_{\mu} - 	\tau_{\mu - 2 \beta^{1/8}} \leq \beta^{1/8}/4$ tends to $1$ when $\tau_{\mu - 2 \beta^{1/8}} < \infty$ as we can bound from below the process $q^-$ again by a Brownian motion with constant drift starting at $c_{\mu - 2 \beta^{1/8}}(\tau_{\mu - 2 \beta^{1/8}} )$ which has an overwelming probability to hit $c_\mu$ within short time (notice again that the drifts are irrelevant here as we work on a small time-interval). And similarly, for the diffusion $r^-$, we obtain $\xi^-_{\mu}(1)-\xi^-_{\mu - \beta^{1/8}}(1) \leq \beta^{1/8}/4$ when $\xi^-_{\mu - \beta^{1/8}}(1)  < \infty$.

We obtain that with probability tending to $1$, we have
\begin{align}
	|	\xi^{-}_{\mu,\beta}(1) \wedge T -\xi^{-}_\mu(1) \wedge T | \leq \beta^{1/8}\,\label{exploclose}
\end{align}

For the next explosion, we work on the event that the ascent of $q^\beta = q^+$ from $-\infty$ (right after the explosion time $\xi^{-}_{\mu,\beta}(1)$) is smaller than $\beta$. This event occurs with overwelming probability thanks to Lemma \ref{lem:ascentexplo}. We also work on the event \eqref{exploclose}.  We have now two diffusions $q^+$ and $r^+$ starting at $0$, but at two different starting times $\tau'_0$ and $\tau''_0$ separated by at most $\beta^{1/8}/2 + \beta$. It suffices to apply the proof of the previous proposition, using as an imput $\beta^{1/8}$ instead of $\beta$. We obtain that 
\begin{align*}
	|q^+(t) - r^+(t)| \leq \beta^{1/64}\,.
\end{align*}
for $t \in [(\tau'_0 \vee \tau''_0)\wedge T,  \tau'_\mu\wedge T]$ where $\tau'_\mu$ is the first hitting time of the critical time by $q^\beta = q^+$ after time $\xi^{-}_{\mu,\beta}(1)$.
Iterating the proof gives the result.
\end{proof}

This proposition in particular implies that the explosion times of $q^\beta$ are ``macroscopic'' (of order $O(1)$) as $\beta \to 0$ with overwelming probability.
Indeed, it takes a strictly positive time (independent of $\beta$) for $r^\pm$ to reach the critical line $c_\mu$ and the explosion times of $q^\beta$ are all close to these limiting hitting times with high probability. Consequently, the number of explosions occuring before any fixed time $T$ is bounded almost surely. 
This shows that Proposition \ref{propo:controlexplokT} controls \emph{all} explosion times of $q^\beta$ up to time $T$ with probability tending to $1$, rather than merely the first $k$ ones.

\subsection{Tightness of the explosion times of the diffusion $q$}\label{subsec:tightexplo}

%
%
%
We would like to prove that for all $\eps >0$, there exists $T >0$ such that
\begin{align*}
 \inf_{\beta \leq \beta_0}\P[\nu_\beta[T,\infty) =0] \geq 1- \eps\,.
\end{align*}

Recall the dynamics of $q^\beta_\mu$ given by \eqref{SDEq-} and \eqref{SDEq+}. In the following, we drop the subscript $\mu$ to simplify the notations. We would like to prove the following lemma:
\begin{lemma}[No explosions after large times]\label{tightnessexplosions}
For any $\eps >0$, there exists a time $T >0$ and $\beta_0 >0$, such that after time $T$, for all $\beta < \beta_0$, there is no explosion of the diffusion $q^\beta$ with probability greater than $1- \eps$.
\end{lemma}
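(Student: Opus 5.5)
We need to show that, uniformly in small $\beta$, the diffusion $q^\beta$ has no explosion after a large time $T$ with high probability. The plan is a renewal-type argument, proving that each phase has a uniformly positive probability of never reaching the critical line $c_\mu$ once $t$ is large.

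\textbf{Step 1: choosing $T_1$ and $K$.} First we use Proposition \ref{propo:controlexplokT} to control finitely many cycles on a fixed window. Fix $\eps>0$. Since $\nu_\mu(\R_+)$ is almost surely finite, choose $T_1$ such that the limiting diffusion $r_\mu$ completes all its cycles before $T_1$, with probability at least $1-\eps/3$. Also choose $K$ such that $\P[\nu_\mu(\R_+)\ge K]\le \eps/3$. Proposition \ref{propo:controlexplokT} (with $k=K+1$ and horizon $2T_1$) then shows the following: for $\beta$ small, with probability at least $1-2\eps/3$, all explosions of $q^\beta$ before $2T_1$ are within $\beta^{1/8^{2k}}$ of those of $r_\mu$. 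In particular, no explosion of $q^\beta$ occurs in $[T_1+1,2T_1]$. On this event $q^\beta$ is, at time $T_1+1$, in some phase $\pm$ and lies in the strip between $-\delta_1$ and $c_\mu(t)-\beta^{1/8}$. Moreover, $r_\mu$ is in a phase that never hits $c_\mu$ again.

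\textbf{Step 2: no hit after time $T_1+1$.} It remains to show that, with probability at least $1-\eps/3$, $q^\beta$ does not hit the line $c_{\mu-\beta^{1/8}}$ after time $T_1+1$. The hitting of this line is what precedes an explosion, as in Lemma \ref{lem:ascentexplo}. Below $c_\mu-\delta_1$ the exponential drift terms are at most $\eps_1=e^{-\delta_1/\beta}/2$, as in the proof of Proposition \ref{propo:compdiffreflectedBM}. Hence $q^\beta$ is dominated by a Brownian motion with drift $(a+1)/4+\eps_1$, reflected at $\delta_1$, and the domination holds globally in time. If instead the upper exponential term were large, the path would already be within $\delta_1$ of the line, which is exactly the event we are excluding.

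\textbf{Step 3: the two cases for $a$.}
\begin{itemize}
\item \emph{Case $a<0$.} Both drifts are strictly below $1/4$. A Brownian motion with drift $\delta<1/4$ reaches $c_\mu(t)$ after time $t_0$ with probability at most of order $e^{-2(1/4-\delta)(\mu+t_0(1/4-\delta))}$. By the reflection bound we may start it at the level of $q^\beta(T_1+1)$. Combined with the tail bound $\P[r_\mu(T_1+1)\ge c_\mu(T_1+1)-M]$, this is at most $\eps/3$ for $T_1$ large.
\item \emph{Case $a\ge0$.} The $+$ phase has drift at least $1/4$ and does hit the line. The key point is that, on the Step 1 event, $q^\beta$ is in the $-$ phase after its last explosion (since $r_\mu$ is). The $-$ phase has drift $-a/4<1/4$, so the bound above applies.
\end{itemize}
The last explosion of $q^\beta$ before $2T_1$ being of type $+$ is guaranteed by the closeness of the explosion counts given by Proposition \ref{propo:controlexplokT}.

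\textbf{Main obstacle.} The delicate step is Step 2: turning the local comparison into a bound uniform over the infinite horizon. The danger is that the exponential term $\exp(-(c_\mu(t)-q)/\beta)$ pushes $q^\beta$ up when it is close to the line. I would handle this by a stopping argument. Let $\sigma$ be the first time after $T_1+1$ that $q^\beta$ reaches $c_{\mu}-\delta_1$. Up to $\sigma$ the domination by a reflected Brownian motion with drift $-a/4+\eps_1$ holds exactly. Hence $\P[\sigma<\infty]$ is bounded by the probability that this comparison process hits $c_{\mu-\delta_1}$. That probability is uniformly small by the explicit Brownian hitting estimate together with a tightness bound on the distance from $q^\beta(T_1+1)$ to the line, which is inherited from $r_\mu$.
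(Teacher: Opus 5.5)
Your argument is correct in outline, but it takes a different route from the paper.

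\textbf{How the two proofs differ.} The paper does not use the finite-horizon convergence at all for this lemma. It starts from an arbitrary, possibly worst-case, state of $q^\beta$ at time $T$ and runs a case analysis:
\begin{itemize}
\item a $+$ phase at time $T$, which for $a\ge 0$ explodes before $T^3$;
\item $q^\beta(T)\le 1$;
\item $q^\beta(T)\ge c_\mu(T)$;
\item $q^\beta(T)$ in the strip, with the next hit of the line before or after $2T$.
\end{itemize}
In every case, before time $T^9$ the process is trapped below one of two reflected Brownian motions $r_1$, $r_2$. These have drift $-a/4+\delta<1/8$ and sit at distance of order $T$ from $c_\mu$, so they hit it with exponentially small probability.

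You instead import Proposition~\ref{propo:controlexplokT}, together with the trajectory closeness behind it (Proposition~\ref{propo:compdiffreflectedBM}). From the limit $r_\mu$ you read off two things about $q^\beta$ at time $T_1+1$: its phase (the never-hitting $-$ phase when $a\ge 0$) and a lower bound on its distance to the line. After that, a single stopped comparison with a reflected Brownian motion of drift $<1/4$ suffices.

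Your route is shorter. It avoids the $T^3$, $T^9$ bookkeeping and the analysis of a $+$ phase occurring after time $T$. The paper's route is self-contained and independent of the limit object. It also gives a bound that is uniform over the configuration at time $T$.

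\textbf{Two points to tidy.}
\begin{enumerate}
\item In Step~2, the blanket domination by a reflected Brownian motion with drift $(a+1)/4+\eps_1$ is wrong in the $-$ phase when $a<-1/2$, since then $-a/4>(a+1)/4$. It also holds only up to your stopping time $\sigma$, not globally in time. Use the drift of the current phase (or the maximum of the two drifts), as you effectively do in Step~3.
\item The tail bound on $c_\mu(T_1+1)-r_\mu(T_1+1)$ needs a line of justification. For the coupled paths one has, pathwise,
\begin{align*}
R^{(\delta,s)}(t)=Y^{\delta,0}(t)-\min_{s\le u\le t}Y^{\delta,0}(u)\le R^{(\delta,0)}(t)\,.
\end{align*}
For any drift $\delta<1/4$, a Gaussian bound on the supremum then makes $\P\big[R^{(\delta,0)}(T_1+1)\ge c_\mu(T_1+1)-M\big]$ exponentially small in $T_1$.
\end{enumerate}
Finally, the claim $q^\beta(T_1+1)\le c_\mu(T_1+1)-\beta^{1/8}$ in Step~1 does not follow from closeness to $r_\mu$. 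You do not need it once you have this tail bound.
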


The main difficulty for this part is that we do not know where the diffusion is at time $T$: in a worst-case scenario, it could be close to the critical line, which would then lead to an explosion after time $T$.

\begin{proof} Let us first suppose that $a \geq 0$ and take some small $\beta$ (the argument below will work for all $\beta$ smaller than some fixed deterministic $\beta_0$).
	
	We analyze the trajectory starting from time $T$ and show that, on an event of probability tending to $1$ when $T$ goes to infinity while at most two explosions may occur in the interval $[T,T^9]$, there will be no explosion after time $T^9$.
	
	First, note that if $q^\beta$ is in the $q^+$ phase at time $T$, its drift is bounded from below by $(a+1)/4 \geq 1/4$. Therefore, it will hit the critical line almost surely. The time it takes to explode is $O(T)$ if $a >0$ and $O(T^2)$ if $a=0$. In the worst-case scenario ($a=0$ and $q^+(T) = -\infty$), it explodes before time $T^3$ with high probability (for some large $T$), since a Brownian motion with drift $1/4$ hits the critical line $\mu+t/4$ before $T^3$ with high probability. Therefore, the process is in the $q^-$ phase at some time prior to $T^3$. Using the strong Markov property, we can restrict our analysis to the case where the process is in the $q^-$ phase at time $T$, but we must now prove that there is no explosion after time $T^3$, to ensure there is no explosion after time $T^9$ for our original process.
	
	Fix $\delta \in (0,1/8)$.
	We introduce two auxiliary diffusions $r_1$ and $r_2$, driven by the same Brownian motion as $q^-$. The diffusion $r_1$ starts at time $T$ from position $1$ and is a reflected Brownian motion above $1$ with drift $-a/4 + \delta$. The diffusion $r_2$ starts at time $2T$ from $c_\mu(T)+3 T/16$ and follows a reflected Brownian motion with drift $-a/4 + \delta$ above $1$.

	We divide the analysis according to the position of $q^\beta$ at time $T$.
	\begin{itemize}
		\item \emph{If $q^\beta(T) \leq 1$.} If $\beta$ is small enough, the drift of the diffusion $q^\beta$ is bounded from above by the drift of $r_1$ for all values in $[1,c_\mu(t)]$ (as $q^\beta$ is in its $q^-$ phase). Therefore $q^\beta$ stays below $r_1$ as long as $r_1$ does not hit the critical line.
		\item  \emph{If $q^\beta(T) \geq c_\mu(T)$}. Then with high probability, $q^\beta$ explodes rapidly thanks to Lemma \ref{lem:ascentexplo}. After this explosion time, $q^\beta$ is in its $q^+$ phase, which explodes with high probability before time $T^3/2$ using the same arguments as above. Then it is back to its $q^-$ phase starting at $-\infty$. In particular, it stays below $r_1$ as long as $r_1$ does not hit the critical line.
		\item \emph{If $1 < q^\beta(T) < c_\mu(T)$}. Let $\tau_T := \inf\{t \geq T,\; q^\beta(t) = c_\mu(t)\}$ be the next hitting time of the critical line. 
		\begin{itemize}
			\item 	If $\tau_T < 2T$, then the diffusion $q^\beta$ explodes quickly after $\tau_T$ thanks to Lemma \ref{lem:ascentexplo}. It then enters the $q^+$ phase, explodes again (before time $T^3/2$) and re-enters the $q^-$ phase at $-\infty$. It will therefore stay below $r_1$ as long as $r_1$ does not hit the critical line.
			\item 	If $\tau_T \geq 2T$, then during the time interval $[T, 2T]$, $q^\beta$ is bounded from above by a reflected Brownian motion with drift $-a/4 + \delta$ starting at time $T$ at position $c_\mu(T)$: with probability tending to $1$ when $T \gg 1$, it will be below $c_\mu(T) +3T/16$ at time $2T$. (Indeed, the probability that a Brownian motion with drift smaller than $1/8$ ends above its starting point plus $3T/16$ decays exponentially with $T$). After time $2T$, on this high probability event, the diffusion $q^\beta$ is therefore bounded by $r_2$ as long as $r_2$ does not hit the critical line.
		\end{itemize}	
	\end{itemize}

In all cases, before time $T^9$, the process $q^-$ becomes trapped below either $r_1$ or $r_2$. 

Those two diffusions have a drift smaller than $1/8$. Since this is strictly less than the slope of the critical line, the probability that they hit the critical line is exponentially decaying with $T$ (see e.g. Proposition \ref{propo:global_hitting_prob} which is a more precise statement that implies this fact).

Therefore whatever its phase or position at time $T$, the diffusion $q^\beta$ will never explode after time $T^9$ on this high probability event.

	The case $a <0$ follows the exact same lines, except it is simpler as we no longer have a phase with a probability of explosion equal to $1$. For either phase, we can bound our diffusion with reflected Brownian motions with drift in a similar way than above.
\end{proof}

\subsection{Proof of Proposition \ref{propo:CVexplosions}}

We now have all the ingredients to conclude the proof of Proposition \ref{propo:CVexplosions}, namely the convergence in law of the random measures $\nu^\beta_\mu \to \nu_\mu$ for the weak topology. We will prove in fact the stronger convergence in probability thanks to our coupling. The argument is very standard but we chose to include it here for completeness.

Recall that $\nu^\beta_\mu = \sum_{i \ge 1} \delta_{\xi^+_{\mu,\beta}(i)}$ and $\nu_\mu = \sum_{i \ge 1} \delta_{\xi^+_\mu(i)}$. As the space of finite measures on $\R_+$ equipped with the weak topology is separable and metrizable, it suffices to show that for any bounded continuous function $f \colon \R_+ \to \R$ and any $\delta > 0$, 
\begin{align*}
	\lim_{\beta \to 0} \P\Big[ \big| \langle \nu^\beta_\mu, f \rangle - \langle \nu_\mu, f \rangle \big| > \delta \Big] = 0\,.
\end{align*}

Fix such a function $f$ and let $\eps > 0$. As the total mass $\nu_\mu(\R_+)$ is finite almost surely and its atoms are finite, we can choose a large integer $K$ and a large time $T$ such that
\begin{align}\label{eq:bound_limit_measure}
	\P\big[\nu_\mu(\R_+) \le K \quad \text{and} \quad \xi^+_\mu(K) \le T \big] \ge 1 - \eps/3\,.
\end{align}
By the tightness of the explosion times (Lemma \ref{tightnessexplosions}), we may increase $T$ and $K$ if necessary so that for all $\beta$ sufficiently small,
\begin{align}\label{eq:bound_beta_measure}
	\P\big[\nu^\beta_\mu([T, \infty)) = 0 \quad \text{and} \quad \nu^\beta_\mu(\R_+) \le K \big] \ge 1 - \eps/3\,.
\end{align}
Let $\mathcal{A}_{\beta}$ denote the intersection of these two events. On $\mathcal{A}_{\beta}$, both point processes have at most $K$ points, all strictly confined to $[0, T]$, which implies $N_\beta := \nu^\beta_\mu(\R_+) = \nu_\mu(\R_+) =: N \le K$.

By Proposition \ref{propo:controlexplokT}, the first $K$ hitting times are pathwise close. That is, there exists $\beta_0 > 0$ such that for all $\beta < \beta_0$, the event
\begin{align*}
	\mathcal{B}_\beta := \Big\{ \max_{1 \le i \le K} \big|\xi^+_{\mu,\beta}(i) \wedge T - \xi^+_\mu(i) \wedge T\big| \le \beta^{1/8^K} \Big\}
\end{align*}
has probability at least $1 - \eps/3$. Let $\omega_{f, T}$ denote the modulus of continuity of $f$ on $[0, T]$. On the event $\mathcal{A}_{\beta} \cap \mathcal{B}_\beta$, we have
\begin{align*}
	\big| \langle \nu^\beta_\mu, f \rangle - \langle \nu_\mu, f \rangle \big| 
	&\le \sum_{i=1}^{N} \big| f(\xi^+_{\mu,\beta}(i)) - f(\xi^+_\mu(i)) \big| \\
	&\le K \cdot \omega_{f, T}\big(\beta^{1/8^K}\big)\,.
\end{align*}

Since $\omega_{f, T}(\beta^{1/8^K}) \to 0$ deterministically as $\beta \to 0$, this upper bound is strictly less than $\delta$ for $\beta$ sufficiently small. As $\P[(\mathcal{A}_{\beta} \cap \mathcal{B}_\beta)^c] \le \eps$, the desired convergence in probability follows. \qed

\section{Asymptotics for the $k$ largest eigenvalues}\label{sec:aympkexplo}
The goal of this section is to provide the proof of Proposition \ref{propo:asympkexplo}. Let $R^{(b)}$ denote the reflected Brownian motion with drift $b$. Recall that it is given by
\begin{align*}
	R^{(b)}(t) = 	Y^{(b)}(t) + \sup_{0 \leq u \leq t} \max(0,-Y^{(b)}(u))
\end{align*}
where $Y^{(b)}(t) = b t + B(t)$ is a (non-reflected) Brownian motion with drift $b$. In our procedure, we deal with reflected Brownian motions whose drift alternates between $-a/4$ (the $-$ phase) and $(a+1)/4$ (the $+$ phase). We define the respective drift parameters as:
\begin{align}
	a_1 := -a/4, \qquad a_2 := (a+1)/4\,.\label{defai}
\end{align}

We are interested in the event where there exist at least $k$ hitting times of $c_\mu$ by the $+$ phase, which corresponds to $2k$ alternating phases hitting the line. When $a \geq 0$, since the final $+$ phase hits the critical line almost surely, the event is a.s. equal to the one where we have at least $2k-1$ hitting times. When $a \in (-1,0)$, both events are rare events, and we need indeed $2k$ hitting times.

Let $\tau^{(1)}_\mu$ and $\tau_\mu^{(2)}$ denote the first hitting times of $c_\mu$ by $R^{(a_1)}$ and $R^{(a_2)}$ respectively. Note that typically, the diffusions $R^{(a_1)}$ and $R^{(a_2)}$ when $a \in (-1,0)$ do not reach the critical line $c_\mu$ in a finite time when $\mu \gg 1$. Conversely, when $a \geq 0$, $R^{(a_2)}$ reaches it in a finite time almost surely in a typical time of order $4 \mu/a$ (for $a >0$). 

\subsection{Large deviation events}
We start by establishing the asymptotic probability that the $-$ phase hits the critical line in a finite time which implies Proposition \ref{propo:asympkexplo} for $k=1$ and $a \geq 0$.
\begin{proposition}\label{propo:global_hitting_prob}
	Let $a > -1$. There exist a constant $C_0(a) >0$ such that for all $\mu > 0$,
	\begin{align*}
		C_0^{-1}(a) \exp\Big(-\mu \; \Big(\frac{1+a}{2}\Big)\Big) \leq \P\big(\tau^{(1)}_\mu < \infty\big) \leq C_0(a) \exp\Big(-\mu \; \Big(\frac{1+a}{2}\Big)\Big)\,.
	\end{align*}
\end{proposition}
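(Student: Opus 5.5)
The lower bound follows from a pathwise comparison, and the upper bound from a decomposition according to the last reflection time. Throughout, write $\lambda := (1+a)/2 > 0$, let $Y(t) = B(t) - at/4$ be the non-reflected path, and set $Z(t) := Y(t) - t/4$, a Brownian motion with drift $-(1+a)/4 = -\lambda/2$.

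\emph{Lower bound.} Since $R^{(a_1)}(t) = Y(t) + \sup_{u\le t}\max(0,-Y(u)) \ge Y(t)$, we have
\[
\{\tau^{(1)}_\mu<\infty\} \supset \{\exists t,\ Y(t)\ge \mu + t/4\} = \{\sup_{t\ge 0} Z(t) \ge \mu\}.
\]
By the classical formula for the supremum of a Brownian motion with negative drift $-\lambda/2$, the right-hand event has probability exactly $e^{-2(\lambda/2)\mu} = e^{-\lambda \mu}$. This gives the lower bound with constant $1$.

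\emph{Upper bound.} I use the explicit Skorokhod form $R^{(a_1)}(t) = \sup_{0\le s\le t}(Y(t)-Y(s))$, which holds with the convention $Y(0)=0$ and $s=0$ allowed. With this, the hitting event becomes
\[
\{\tau^{(1)}_\mu<\infty\} = \{\exists\, 0\le s\le t:\ Z(t)-Z(s) \ge \mu + s/4\}.
\]
I split according to $s\in[n,n+1]$, $n\ge 0$, and according to whether $t\le n+1$ or $t\ge n+1$.

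In the first case ($s,t\in[n,n+1]$), the event forces the oscillation of $Z$ on $[n,n+1]$ to exceed $\mu+n/4$. By stationarity of increments and Gaussian tails for the oscillation of a drifted Brownian motion on a unit interval, this has probability at most $C e^{-c(\mu+n/4)^2}$. Summed over $n$, this is $\le C' e^{-\lambda\mu}$ for all $\mu>0$, with a constant depending on $a$.

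In the second case I write
\[
Z(t)-Z(s) = \big(Z(t)-Z(n+1)\big) + \big(Z(n+1)-Z(s)\big) \le M_{n+1} + D_n,
\]
where $M_{n+1} := \sup_{t\ge n+1}(Z(t)-Z(n+1))$ and $D_n := \sup_{s\in[n,n+1]}(Z(n+1)-Z(s))$. These two variables depend on disjoint increments, so they are independent. Moreover $M_{n+1}$ is exponential with rate $\lambda$, and $D_n$ has Gaussian tails with a law independent of $n$. Hence, by Markov's inequality on $e^{\lambda(M+D)}$ after conditioning on $D_n$,
\[
\P(M_{n+1}+D_n\ge \mu+n/4) = \E\big[\min(1,e^{-\lambda(\mu+n/4-D_n)})\big] \le \E[e^{\lambda D_0}]\, e^{-\lambda\mu}\, e^{-\lambda n/4}.
\]
Summing over $n\ge0$ gives at most $C(a) e^{-\lambda\mu}$, since $\E[e^{\lambda D_0}]<\infty$.

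Combining the two cases yields the upper bound. The only step I expect to need care is arranging the decomposition so that the exponential part and the local part are independent. That is why the split point is $n+1$ rather than $n$: otherwise $D_n$ and $M_n$ would share increments on $[n,n+1]$. The slope $1/4$ of the critical line provides the factor $e^{-\lambda n/4}$ that makes the union bound over $n$ summable. The remaining ingredients are routine Gaussian and exponential tail estimates.
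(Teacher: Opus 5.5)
Your proof is correct, and it takes a different route from the paper.

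\textbf{The paper's route.} The paper uses one Girsanov change of measure, from drift $-a/4$ to $b^\ast=(a+2)/4$. This drift is chosen so that the coefficient of $\tau$ in the Radon--Nikodym exponent vanishes. Since $b^\ast>1/4$, the event $\{\tau_{b^\ast}<\infty\}$ is almost sure, and on it the density equals $e^{-\lambda\mu}e^{\lambda L^{(b^\ast)}(\tau_{b^\ast})}$, with $\lambda=(1+a)/2$. Both bounds then follow from $0\le L^{(b^\ast)}(\tau_{b^\ast})\le L^{(b^\ast)}(\infty)$, where the right-hand side is exponential with parameter $(a+2)/2$. This gives the explicit constants $1$ and $a+2$.

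\textbf{Your route.} You argue pathwise. For the lower bound you discard the reflection ($R\ge Y$) and use the supremum of the drifted Brownian motion $Z$, which gives the same constant $1$. For the upper bound you use Lévy's representation $R(t)=\sup_{s\le t}(Y(t)-Y(s))$. You then take a union bound over the unit cell $[n,n+1]$ containing $s$, and split the increment into the independent pieces $M_{n+1}$ and $D_n$.

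I checked the key steps, and they hold:
\begin{itemize}
\item the equivalence $Y(t)-Y(s)\ge\mu+t/4 \iff Z(t)-Z(s)\ge\mu+s/4$;
\item the Gaussian bound in the case $t\le n+1$, which sums to $O(e^{-c\mu^2})=O(e^{-\lambda\mu})$ uniformly in $\mu>0$;
\item the independence of $M_{n+1}$ and $D_n$, and the law $M_{n+1}\sim\mathrm{Exp}(\lambda)$;
\item the geometric summability coming from $e^{-\lambda n/4}$.
\end{itemize}

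\textbf{Comparison.} Your argument is elementary and needs no change of measure. It also shows why the decay rate is exactly that of the unreflected path: the reflection costs only the multiplicative factor $\E[e^{\lambda D_0}]/(1-e^{-\lambda/4})$ plus a Gaussian term. What it gives up is an explicit constant and some brevity. The paper's tilting argument is also the template for Proposition~\ref{propo:localhitting}, where the time-dependent optimal drift $b^\ast=1/4+1/s$ produces the time-localized estimates needed for the $k$-point asymptotics. Both arguments carry over directly to Proposition~\ref{propo:global_hitting_prob2}; in yours, $Z$ then has drift $a/4$ and $\lambda=|a|/2$.
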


\begin{proof}
	We apply the Girsanov formula to change the drift of the reflected Brownian motion from $a_1 = -a/4$ to a constant drift $b > 1/4$ so that the line is hit almost surely. Denote by $\tau_b$ the first hitting time of $c_\mu$ by $R^{(b)}$. We have
	\begin{align*}
		\P\big[\tau^{(1)}_\mu < \infty \big] = \E[\exp(G_{\tau_b}(Y^{(b)}))1_{\{\tau_b< \infty\}}] \,,
	\end{align*} 
 (as the change of drift is constant, Novikov's condition is trivially satisfied on finite intervals: one applies the change of measure up $\tau_b \wedge T$ and passes to the limit as $T \to \infty$). 
At the hitting time $\tau_b$, the exponent of the Radon-Nikodym derivative is:
	\begin{align}
		G_{\tau_b}(Y^{(b)}) &= (a_1 - b) (\mu + \tau/4 - L^{(b)}(\tau_b)) - \frac{1}{2}\tau_b (a_1^2 - b^2) \notag \\
		&= (a_1 - b)\mu + (b - a_1)L^{(b)}(\tau_b) + \tau_b \Big( \frac{a_1 - b}{4} - \frac{a_1^2 - b^2}{2} \Big)\,. \label{expressionG}
	\end{align}
	Let us choose $b$ such that the coefficient in front of the time $\tau_b$ vanishes. It gives $b^\ast := (a+2)/4$. With this choice, the exponent is equal to
	\begin{align*}
		G_{\tau_{b^\ast}}(Y^{(b^\ast)}) = -\mu \frac{a+1}{2} + \frac{a+1}{2} L^{(b^\ast)}(\tau_{b^\ast})\,.
	\end{align*}
One can simply bound the local time by: $0 \le L^{(b^\ast)}(\tau_{b^\ast}) \le L^{(b^\ast)}(\infty)$. The random variable $L^{(b^\ast)}(\infty) = \sup_{t \geq 0} (-B(t) - b^\ast t)$ follows an exponential distribution with parameter $2b^\ast = (a+2)/2$. Therefore, 
\begin{align*}
	\E[\exp(((a+1)/2) L^{(b^\ast)}(\infty))] = \frac{a+2}{2} \int_0^\infty \exp(-x/2) dx \,.
\end{align*}
The expectation $\E[\exp(\frac{a+1}{2} L^{(b^\ast)}(+\infty))]$ is bounded above by $a+2 >0$, which gives the desired upper bound. The lower bound follows directly from the fact that $L^{(b^\ast)} \geq 0$ and $\P[\tau_{b^\ast} < \infty]=1$.
\end{proof}



The exact same proof gives the following asymptotics for $R^{(a_2)}$ when $a \in (-1,0)$.
\begin{proposition}\label{propo:global_hitting_prob2}
	Let $a > -1$. There exist a constants $C_0(a) >0$ such that for all $\mu > 0$,
		\begin{align*}
		C_0^{-1}(a) \exp\Big(-\mu \; \frac{|a|}{2}\Big) \leq \P\big(\tau^{(2)}_\mu < \infty\big) \leq C_0(a) \exp\Big(-\mu \; \frac{|a|}{2}\Big)\,.
	\end{align*}
\end{proposition}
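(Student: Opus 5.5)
The plan is to reproduce the Girsanov argument of Proposition \ref{propo:global_hitting_prob}, replacing the drift $a_1=-a/4$ by $a_2=(a+1)/4$, and to track where the sign of $a$ enters. First I will change the drift of $R^{(a_2)}$ to a constant $b$ and write
\begin{align*}
\P\big(\tau^{(2)}_\mu<\infty\big)=\E\big[\exp(G_{\tau_b}(Y^{(b)}))1_{\{\tau_b<\infty\}}\big].
\end{align*}
This is justified exactly as before: apply the change of measure up to $\tau_b\wedge T$, then let $T\to\infty$. Using $Y^{(b)}(\tau_b)=\mu+\tau_b/4-L^{(b)}(\tau_b)$, formula \eqref{expressionG} with $a_1$ replaced by $a_2$ gives
\begin{align*}
G_{\tau_b}=(a_2-b)\mu+(b-a_2)L^{(b)}(\tau_b)+\tau_b\,(a_2-b)\Big(\frac14-\frac{a_2+b}{2}\Big).
\end{align*}

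Next I will kill the time coefficient by taking $b^\ast:=\tfrac12-a_2=\tfrac{1-a}{4}$. With this choice the exponent becomes
\begin{align*}
G_{\tau_{b^\ast}}=\frac{a}{2}\,\mu-\frac{a}{2}\,L^{(b^\ast)}(\tau_{b^\ast}).
\end{align*}
For $a\in(-1,0)$ we have $b^\ast>1/4$, so by Remark \ref{remDriftCL} the line $c_\mu$ is hit almost surely under the new measure. The exponent then equals $-\tfrac{|a|}{2}\mu+\tfrac{|a|}{2}L^{(b^\ast)}(\tau_{b^\ast})$. Since $L^{(b^\ast)}\ge 0$, this gives the lower bound with constant $1$. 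For the upper bound, I use $L^{(b^\ast)}(\tau_{b^\ast})\le L^{(b^\ast)}(\infty)\sim\mathrm{Exp}(2b^\ast)=\mathrm{Exp}((1-a)/2)$. The exponential moment of order $|a|/2=-a/2$ is finite because $-a/2<(1-a)/2$; it equals $(1-a)/(1-a+a)=1-a$. So one can take $C_0(a)=1-a$.

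The remaining case is $a\ge0$, and I expect it to be the main obstacle. Here $b^\ast\le 1/4$, so the Girsanov route fails. In this case I argue directly. Since $a_2\ge1/4$, Remark \ref{remDriftCL} gives $\P(\tau^{(2)}_\mu<\infty)=1$, so the lower bound $C_0^{-1}e^{-\mu|a|/2}\le 1$ holds trivially for any $C_0\ge1$. The upper bound $1\le C_0e^{-\mu a/2}$ holds for $a=0$ with $C_0=1$. For $a>0$, however, this upper bound cannot hold uniformly in $\mu>0$, since the probability is $1$ while $e^{-\mu a/2}\to0$.

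So the two-sided estimate is genuinely established for $a\in(-1,0]$, and only the lower bound holds for $a>0$. This is consistent with the way the paper introduces the statement ("for $R^{(a_2)}$ when $a\in(-1,0)$"). It also suffices for Proposition \ref{propo:asympkexplo}, because for $a\ge0$ the final $+$ phase is hit almost surely and no estimate on it is needed.
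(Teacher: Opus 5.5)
Your argument is correct and is exactly the paper's route: the paper's proof is just ``the exact same proof'' as Proposition \ref{propo:global_hitting_prob}, i.e.\ Girsanov to the drift $b^\ast=(1-a)/4$ that cancels the $\tau_b$ coefficient, then $0\le L^{(b^\ast)}(\tau_{b^\ast})\le L^{(b^\ast)}(\infty)\sim\mathrm{Exp}((1-a)/2)$, which gives $C_0(a)=1-a$. Your remark that the upper bound fails for $a>0$ is also right, since there $\P(\tau^{(2)}_\mu<\infty)=1$; the statement should be read for $a\in(-1,0]$, as the paper's sentence introducing it (``when $a\in(-1,0)$'') indicates.
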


To compute the probability of $n$ successive hitting times, we must optimize the trajectory, and choose appropriate duration for each of the hitting times. 

\begin{proposition}\label{propo:localhitting}
Let $a >-1$ and $i \in \{1,2\}$. Fix $\eps >0$ and $\delta >0$. Recall from \eqref{defai} the definition of $a_i$. Define $t^\ast_i := 1/|a_i-1/4|$ and the rate functions:
%
\begin{align*}
	I_i(t) :=  \frac{ (1 + (1/4-a_i) t)^2}{2t} \,.
\end{align*}
	Then, there exists a constant $C := C(\delta, \eps,a)$ depending only on $a$, $\delta$ and $\eps$ such that,
	for all time $t \in [\eps, t^\ast_i-\eps]$, and all $\mu >0$,
	\begin{align*}
		C^{-1} \mu^{-1/2}\, \exp\big(-\mu \, I_i(t)\big)\leq \P\big(\tau^{(i)}_\mu \in [\mu t ,\mu t+\delta]\big)  \leq C \, \exp\big(-\mu \, I_i(t)\big)\,.
	\end{align*}
Furthermore, for the initial interval $[0,\mu \, \eps]$, there exists $C_0(a)$ depending only on $a$ such that for $i=1,2$, and for all $\mu >0$,
\begin{align*}
\P\big(\tau^{(i)}_\mu \in [0, \mu \, \eps]\big) \leq C_0(a)  \exp\big(-\mu\,  I_i(\eps)\big)\,.
\end{align*}
\end{proposition}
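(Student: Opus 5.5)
The idea is to repeat the Girsanov argument of Proposition~\ref{propo:global_hitting_prob}, but with a drift $b$ that depends on $t$. We tune $b$ so that, under the new measure, the reflected motion $R^{(b)}$ typically hits $c_\mu$ exactly at time $\mu t$. The natural choice is
\begin{align*}
	b = b(t) := \frac{\mu + \mu t/4}{\mu t} = \frac1t + \frac14 .
\end{align*}
As in \eqref{expressionG}, on the event $\{\tau_b = s\}$ the Radon--Nikodym exponent is
\begin{align*}
	G_s = (a_i-b)\,(\mu + s/4) - \frac{s}{2}(a_i^2-b^2) + (b-a_i)\,L^{(b)}(s)\,.
\end{align*}
Write $\Phi(s)$ for the deterministic part, i.e.\ $G_s$ without the local time term. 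Plugging in $s=\mu t$ and $b=1/t+1/4$, a direct computation gives $\Phi(\mu t) = -\mu I_i(t)$: this is just the Gaussian cost of travelling a distance $\mu(1+t/4)$ in time $\mu t$ against drift $a_i$. Moreover, $\partial_s \Phi(s) = (b-a_i)\big((b+a_i)/2 - 1/4\big)$ is bounded uniformly for $t\in[\eps,t_i^\ast-\eps]$. Hence on $\{\tau_b\in[\mu t,\mu t+\delta]\}$ we have $|\Phi(\tau_b)+\mu I_i(t)| \le C(\delta,\eps,a)$.

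\textbf{Upper bound.} We have
\begin{align*}
	\P\big(\tau^{(i)}_\mu\in[\mu t,\mu t+\delta]\big) = \E\big[e^{G_{\tau_b}}1_{\{\tau_b\in[\mu t,\mu t+\delta]\}}\big] \le e^{-\mu I_i(t)+C}\,\E\big[e^{(b-a_i)L^{(b)}(\infty)}\big].
\end{align*}
First, $b-a_i = (1+(1/4-a_i)t)/t>0$. This is automatic when $a_i<1/4$; when $a_i>1/4$ (that is, $a_2$ with $a>0$) it is exactly the constraint $t<t^\ast_i$, and the margin $\eps$ keeps it bounded away from $0$. Second, $L^{(b)}(\infty)\sim\mathrm{Exp}(2b)$, so the exponential moment is finite as soon as $b+a_i>0$. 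For $a_1=-a/4$ with $a>0$, the constraint $t<t_1^\ast=4/(1+a)$ gives $b+a_1 > 1/2$; in the other cases $a_i\ge 0$, so $b+a_i>0$ trivially. All these bounds are uniform over the compact range of $t$, which yields the upper bound with $C$ depending only on $(\delta,\eps,a)$.

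\textbf{Lower bound.} Since $L\ge 0$, the exponent satisfies $G_{\tau_b}\ge \Phi(\tau_b) \ge -\mu I_i(t) - C$. We can therefore drop the local-time factor and obtain
\begin{align*}
	\P\big(\tau^{(i)}_\mu\in[\mu t,\mu t+\delta]\big) \ge e^{-\mu I_i(t)-C}\,\P\big(\tau_b\in[\mu t,\mu t+\delta]\big).
\end{align*}
It remains to show that $\P(\tau_b\in[\mu t,\mu t+\delta])\ge c\,\mu^{-1/2}$; this is the step I expect to need the most care. Under the $b$-measure, $R^{(b)}-c_\mu$ has drift $1/t>0$. After the (a.s.\ finite, $O(1)$) last visit to $0$, the hitting time of the line is a first-passage time of a Brownian motion with positive drift across a level of order $\mu$. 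Such a time has an explicit inverse Gaussian density, of size $\asymp \mu^{-1/2}$ near its mean $\mu t$, uniformly for $t$ in the compact range. To make this rigorous, I would condition on the path up to a fixed time $s_0$ and restrict to the event that $R^{(b)}$ never returns to $0$ after $s_0$, which has probability bounded below. On that event, $\tau_b$ is the first passage time of a drifted Brownian motion started at $R^{(b)}(s_0)\in[0,K]$, and the inverse Gaussian density bound gives probability $\ge c\,\delta\,\mu^{-1/2}$ for the window. Uniformity in $\mu$ for small $\mu$ is trivial after adjusting $C$.

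\textbf{Initial interval.} For $[0,\mu\eps]$ I would use the same formula with the fixed drift $b=1/\eps+1/4$. With $\eps$ small, $b$ is large, so $\partial_s\Phi>0$; thus $\Phi(s)\le\Phi(\mu\eps)=-\mu I_i(\eps)$ for all $s\le\mu\eps$. The local-time factor is bounded by the same exponential moment as in the upper bound, which gives $\P(\tau^{(i)}_\mu\le\mu\eps)\le C_0(a)e^{-\mu I_i(\eps)}$. If $\eps$ is not small, we can replace it by a smaller $\eps'$; since $I_i$ is decreasing on $(0,t_i^\ast)$, this only costs a harmless adjustment of the constant.
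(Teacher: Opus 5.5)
Your Girsanov part follows the paper's proof and is correct, and in places tidier. This covers the choice $b=1/t+1/4$, the identity $\Phi(\mu t)=-\mu I_i(t)$ with bounded slope over the window, and the moment $\E[e^{(b-a_i)L^{(b)}(\infty)}]=2b/(b+a_i)$ with $b+a_1>1/2$ coming from $t<t_1^\ast$. It also covers the initial interval, where you use the fixed drift $b=1/\eps+1/4$ and $\Phi$ is increasing. That monotonicity holds for every $\eps<t_i^\ast$, so your reduction to a smaller $\eps'$ is unnecessary. As written, that reduction would only bound the smaller event $\{\tau^{(i)}_\mu\le\mu\eps'\}$.

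\textbf{The gap.} It is the step you flagged: $\P(\tau_b\in W)\ge c\,\mu^{-1/2}$ with $W:=[\mu t,\mu t+\delta]$. The paper obtains this from Brownian bridge estimates. You know two things separately: the no-return event $A$ has probability at least $p$, and the free first-passage time $T$ satisfies $\P(T\in W)\ge c\,\mu^{-1/2}$. These do not give $\P(A\cap\{T\in W\})\gtrsim\mu^{-1/2}$. The events are dependent, and on $A$ the path is a drifted Brownian motion conditioned to stay positive, for which the inverse Gaussian density is not directly available.

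To close the gap you need a quantitative decoupling. One way:
\begin{itemize}
\item Restart at the hitting time of a level $x_0$.
\item Apply the strong Markov property at the first return time $H_0$ to $0$ to get $\P(T\in W,\ H_0<T)\le e^{-2bx_0}\sup_h p_h$. Here $p_h$ is the probability that the free motion started from $0$ at time $h$ first crosses $c_\mu$ during $W$.
\item Show $\sup_h p_h\le C'\mu^{-1/2}$.
\item Observe that, for $\mu$ large, the lower density constant from the restart point does not degrade as $x_0$ grows. Then choose $x_0$ so large that $e^{-2bx_0}C'$ is below half of it.
\end{itemize}
The paper decouples through the local time instead. On $\{L^{(b)}(\mu t)\le l\}$ one has $Y\le R^{(b)}\le Y+l$ on $[0,\mu t]$, and $R^{(b)}\ge Y$ always. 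So only the free path, conditioned on its value at time $\mu t$, has to be controlled, and taking $l$ large makes the local-time constraint almost free.

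\textbf{Small $\mu$.} Your remark that small $\mu$ is trivial after adjusting $C$ is false. As $\mu\to0$ the lower bound $C^{-1}\mu^{-1/2}e^{-\mu I_i(t)}$ blows up. Meanwhile $\P(\tau^{(i)}_\mu\ge\mu t)\to0$, because the reflected motion reaches height $\mu(1+t/4)$ in time of order $\mu^2\ll\mu t$. The lower bound can only hold for $\mu\ge\mu_0$. That is what the paper's argument actually proves, and it is all the large deviation estimates use.
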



\begin{remark}
	Note that the probabilities are increasing with the time $t$ up to their optimal value at $t^\ast_i = 4/(a+1)$ for $i=1$ and $t^\ast_i=4/|a|$ for $i=2$.
\end{remark}

\begin{proof}
Take $i \in \{1,2\}$. One can again apply Girsanov's formula to change the drift of $Y^{(a_i)}$ to $Y^{(b)}$. Note that by construction, we will always accelerate time meaning we take  $b > a_i$. Denote by $\tau_b$ the first hitting time of $c_\mu$ by $R^{(b)}$. We have
	\begin{align*}
		\P\big[\tau^{(i)}_\mu/\mu \in [t , t+\delta/\mu]\big] = \E[\exp(G_{\tau_b}(Y^{(b)}))1_{\{\tau_b /\mu \in [t , t+\delta/\mu]\}}] \,,
	\end{align*}
	with
	\begin{align*}
		G_T(Y^{(b)}) &= (a_i - b) (Y_T - Y_0) - \frac{1}{2}  T (a_i^2 - b^2)\,.
	\end{align*}
	At time $T = \tau_b$, we get
	\begin{align*}
		G_{\tau_b}(Y^{(b)}) &= (a_i - b) (Y(\tau_b) - Y_0) - \frac{1}{2}  \tau (a_i^2 - b^2)\\
		&= (a_i- b) (\mu + \tau_b/4 - L^{(b)}(\tau_b)) - \frac{1}{2}  \tau (a_i^2 - b^2)\\
		&= -(b -a_i) \mu - \tau_b (b - a_i)/4 + \tau_b (b-a_i )(a_i + b)/2 + (b - a_i) L^{(b)}(\tau_b) \,.
	\end{align*}
	
	Denote 
	\begin{align*}
		I(b,s) := (b-a_i)\big(1 + \frac{s}{4}\big) - \frac{s}{2}(b-a_i)(a_i + b)\,.
	\end{align*}
	We have
	\begin{align*}
		G_{\tau_b}(Y^{(b)}) &= - \mu I(b,\tau_b/\mu) + (b - a_i) L^{(b)}(\tau_b)\,.
	\end{align*}

	We want to minimize the exponent by taking the supremum over $b$ of the quadratic function $I(b,s)$.
	The unconstraint maximum of $I(b,s)$ is reached at $b^\ast = 1/4 + 1/s$ and its value is
	\begin{align*}
		I(b^\ast,s)
		&= \frac{\big( 1 + (\frac{1}{4} -  a_i) s\big)^2 }{2s}
	\end{align*}

	Moreover, we can bound $L^{(b)}(\tau_b)$ by $L^{(b)}(\infty)$. The random variable $L^{(b)}(\infty) = \sup_{t \geq 0} (-B(t) - bt)$ follows an exponential distribution with parameter $2b$. Therefore, 
	\begin{align*}
		\E[\exp((b - a_i) L^{(b)}(\infty))] = (2b) \int_0^\infty \exp(-b x - a_i x) dx \,.
	\end{align*}
	The last integral is finite iff $b + a_i>0$ and equals $2b/(b+a_i)$. When we choose $b=b^\ast$, this always holds for $i=2$. It also holds for $i=1$ since we restricted the times to $t \leq t_1^\ast =4/(a+1) < 4/(a-1)$ for $a >1$ (when $a \in (-1,1)$, it is always satisfied). 
	
	Let us first examine the case $i=2$. Then we obtain the upper bound
	\begin{align*}
		\P\big[\tau^{(2)}_\mu/\mu \in [t , t+\delta/\mu]\big] &\leq 	C \exp\Big(- \mu \big(\frac{1}{2 (t+\delta/\mu)}+ \frac{1}{2} (\frac{a}{4})^2 t - \frac{a}{4}\big) \Big)\\
	\end{align*}
	where $C$ is some constant depending only on $a$. For the lower bound, using that $L^{(b)}(\tau_b) \geq 0$, we obtain
	\begin{align*}
		\P\big[\tau^{(2)}_\mu/\mu \in [t , t+\delta/\mu]\big] \geq 	p_0(\mu) \exp \Big(- \mu \big(\frac{1}{2 t}+ \frac{1}{2} (\frac{a}{4})^2 (t+\delta/\mu) - \frac{a}{4}\big) \Big)\,,
	\end{align*}
	where 
	\begin{align*}
		p_0(\mu) := \P\big[R^{(b^\ast)} \mbox{ reaches } c_\mu \mbox{ in the time interval } [\mu t, \mu t+\delta]\big]\,.
	\end{align*}	
	Recall that the drift is $b^\ast = 1/4 + 1/t$ and $R^{(b^\ast)}(s) = B(s) + b^\ast s + L^{(b^\ast)}(s)$. Let $l > 0$. On the event $L^{(b^\ast)}(\mu t) \leq l$, we have $L^{(b^\ast)}(s) \leq l$ for all $s$, and we obtain:
	\begin{align*}
		p_0(\mu) &\geq \P\Big(L^{(b^\ast)}(\mu t) \leq l, \; \sup_{s \leq \mu t} (B(s)+ s/t) \leq \mu  - l,\; B(\mu t + \delta) \geq - \frac{\delta}{t}\Big)\,.
		\end{align*}
	Using Markov property at time $\mu t$, we obtain the product
	\begin{align}
		p_0(\mu) &\geq \P\Big(L^{(b^\ast)}(\mu t) \leq l, \;  \sup_{s \leq \mu t} (B(s) + s/t) \leq \mu - l,\; B(\mu t) \in [-l-1,-l]\Big) \label{lowerboundp01}\\
		&\qquad \times \inf_{l_0 \in [l,l+1]} \P \Big(B(\delta)  \geq l_0  - \frac{\delta}{t}\Big)\,. \label{lowerboundp02} 
	\end{align}
	The probability \eqref{lowerboundp01} is bounded by the intersection of three events:
	\begin{itemize}
		\item $\{B(\mu t)  \in [ - l -1, - l] \}$. By Gaussian scaling, it is of order $O(1/\sqrt{\mu})$.
		\item Conditioned on the endpoint $B(\mu t) + \mu t/t = x \in [\mu -l-1, \mu-l]$, the process $Y^x(s) := B(s) + s/t$ is a Brownian bridge of length $\mu t$ from $0$ to $x$. We require $\sup_{s \le \mu t} Y^x(s) \le \mu - l$. The probability that a bridge of length $T$ ending at $y$ stays below a barrier $A \ge y$ is exactly $1 - \exp(-2A(A-y)/T)$ (see e.g. Proposition 8.1 in \cite{karatzas}). 
		
		Substituting $A = \mu - l$, $y = x$, and $T = \mu t$, we obtain $A-y = \mu -l-x := \tilde x \in [0, 1]$. The conditional probability is therefore:
		\begin{align*}
			1 - \exp\Big( - \frac{2(\mu - l) \tilde x}{\mu t} \Big) \underset{\mu \to \infty}{\longrightarrow} 1 - \exp\Big( - \frac{2\tilde x}{t} \Big) > 0\,.
		\end{align*}
		Thus, the probability of the path staying below the line is bounded below by a strictly positive constant.
		\item $\{L^{(b^\ast)}(\mu t) \leq l \}$ for the conditioned bridge $Y^x$, which holds if $\inf_{s \le \mu t} (Y^x(s) + s/4) \ge -l$. Adding the linear drift $s/4$ to the Brownian bridge $Y^x(s)$ simply yields a new Brownian bridge of length $\mu t$, starting at $0$ and ending at $y' := x + \mu t / 4$. By symmetry, the probability that its infimum stays above $-l$ is equivalent to the supremum of a bridge ending at $-y'$ staying below $l$. Using the same identity as above from \cite{karatzas}, this probability is exactly $1 - \exp(-2l(l + y')/\mu t)$. Substituting $y'$, the exponent converges asymptotically:
		\begin{align*}
			\frac{2l(l + x + \mu t/4)}{\mu t}\underset{\mu \to \infty}{\longrightarrow} 2l\Big(\frac{1}{t} + \frac{1}{4}\Big) = 2b^\ast l\,.
		\end{align*}
		Thus, this conditional probability converges to $1 - e^{-2b^\ast l}$.
	\end{itemize}
By taking $l$ large enough so that the last probability is sufficiently close to $1$, we obtain a strictly positive probability for the intersection of the three events. Then we get a positive probability for the event \eqref{lowerboundp02} (decreasing with $l$, but this is a fixed parameter), completing the proof.

	%
	%
	%

	Let us now examine the case $i=1$ that is $a_i = -a/4$. As we ask that $t < 4/(a+1) - \eps$, this implies that $1/t + 1/4 > a/4$, therefore we can choose $b = b^\ast$ and we obtain the upper bound:
	\begin{align*}
		\P\big(\tau^{(1)}_\mu/\mu \in [t , t+\delta/\mu]\big) \leq C \exp\Bigg(- \mu \big(\frac{1}{2 (t+\delta/\mu)}+ \frac{1}{2} (\frac{1+a}{4})^2 t + \frac{1+a}{4}\big)\Bigg)\,.
	\end{align*}
	where $C$ is some absolute constant depending only on $a$.
	The lower bound is similar to the case $i=2$.
	
	For the initial interval $[0, \mu \eps]$, the upper bound is obtained by evaluating the supremum of the exponentially small probabilities at $t= \eps$. 
\end{proof}

\subsection{Proof of Proposition \ref{propo:asympkexplo} for $a \geq 0$}
Let us now prove the desired proposition for $a \geq 0$. We discretize the time intervals to bound the probability of multiple hitting times. For clarity, we write it in the case $n=3$ (which corresponds to the event $\{\lambda_2 \geq \mu\}$) although it naturally extends to any $n$.

Let us fix a sufficiently large constant $M > a+2$ such that the probabilities $\P(\tau_\mu^{(i)} \geq \mu M)$ become exponentially negligible in front of the expected decay. Note as well that when $\eps$ is small enough, we also have that $\P(\tau_\mu^{(i)} \leq \mu \eps)$ is exponentially negligible. We then partition $[\mu \eps ,\mu M]$ into a grid of size $\delta = 1$, letting the integers $k_i \in K:= \{\lfloor \eps \mu \rfloor,\cdots,\lceil  M \mu \rceil\}$ for $i=1,2$. We define the successive boundary heights at the end of each phase:
\begin{align*}
	H_1(k_1) &:= \mu + k_1/4\,, \\
	H_2(k_1,k_2) &:= H_1(k_1) + k_2/4\,.
\end{align*}

Using the strong Markov property and the union bound over the elements, we obtain the upper bound
\begin{align*}
	\P(\gl_2 \geq \mu) \leq \big(\mu M\big)^2 \sup_{k_1,k_2 \in K} \Big[ \P\big( \tau_\mu^{(1)} \in [ k_1, k_1 + 1]\big)\ \P\big( \tau_{H_1}^{(2)} \in [k_2,k_2 + 1]\big) \; \P\big(\tau_{H_2}^{(1)} < \infty\big)\Big] + \cE(\eps,M)
\end{align*}
where $\cE(\eps,M)$ contains the exponentially negligible terms.
Similarly, restricting the trajectory to the most probable combination $k_1$, $k_2$ gives a very close lower bound:
\begin{align*}
	\P(\gl_2 \geq \mu) &\geq \sup_{k_1, k_2 \in K} \Big[ \P\big(\tau^{(1)}_\mu \in [k_1, k_1+1]\big) \P\big(\tau^{(2)}_{H_1} \in [k_2, k_2+1]\big) \P\big( \tau^{(1)}_{H_2} < \infty\big) \Big]\,.
\end{align*}

By substituting the exponential estimates from Proposition \ref{propo:localhitting} and Proposition \ref{propo:global_hitting_prob}, up to polynomial factors in $\mu$, the probability scales exponentially as:
\begin{align*}
	\exp\Big(- \inf_{k_1,k_2 \in K}\big( \mu \, I_1(k_1/\mu) + H_1 \, I_2(k_2/H_1) + H_2 \, \frac{a+1}{2}\big)\Big)\,.
\end{align*}

Let $s_1 = k_1/\mu$ and $s_2 = k_2/H_1$ (note that $k_2 \leq M/\mu$ implies $s_2 \leq M \mu/H_1 \leq M$). Because the rate functions $I_1$ and $I_2$ are uniformly continuous on the compact domain $[\eps, M]$, evaluating this discrete infimum over the fine grid converges exactly to the continuum infimum over the real-valued relative times $s_1, s_2 \in [\eps, M]$. 

Furthermore, because the heights scale multiplicatively as $H_1 = \mu(1+s_1/4)$ and $H_2 = H_1(1+s_2/4)$, we can factor the height variables out of the exponent which gives for the minimal cost:
\begin{align*}
 \mu \inf_{s_1,s_2 \in [\eps,M]} \Big[ I_1(s_1) + \big(1+s_1/4\big) \inf_{s_2} \Big( I_2(s_2) + \big(1+s_2/4\big) \frac{a+1}{2} \Big) \Big]\,.
\end{align*}

For the probability $\{\gl_k \geq \mu\}$, applying the exact same grid discretization, leads to an exponential decay of the form $\exp(-\mu C_k)$, where the sequence of optimal costs $C_k$ is defined by the following recursion.

Let $V_n(H)$ be the cost starting at boundary height $H$ to have more than $n$ hitting times of the critical line. It is given by $V_n(H) = C_n H$ and we have the recursion relation:
\begin{align*}
	V_n(H) 
	&= H \min_t \Big[ \frac{\big(1 + (1/4 - a_n) t\big)^2}{2t} + C_{n-1} (1+  t/4)\Big] \,,
\end{align*}
where $a_n := -a/4$ if $n$ is odd, and $a_n := (a+1)/4$ if $n$ is even. Using the fact that the minimum of $t \mapsto A/t + B t$ when $A >0$ and $B>0$ is reached at $\sqrt{A/B}$ and its value is equal to $2 \sqrt{AB}$, this gives the recursive relation:
\begin{align}
	C_n = (1/4 - a_n) + C_{n-1} + \sqrt{(1/4 - a_n)^2 + C_{n-1}/2}\,. \label{eq:recurrence_C} 
\end{align}

The sequence $C_n$ admits a closed-form solution, which we prove by induction. 

\begin{lemma}[Explicit Exponential Costs]\label{lem:explicit_costs}
	For all $n \ge 1$, the sequence defined by \eqref{eq:recurrence_C} satisfies:
	\begin{align*}
		C_{2n-1} &= \frac{n(a+n)}{2}\,, \\
		C_{2n}   &= \frac{n(a+n+1)}{2}\,.
	\end{align*}
\end{lemma}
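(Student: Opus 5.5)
The proof is a two-step induction on $n$, alternating between odd and even indices. I set $d_n := 1/4 - a_n$, so that $d_n = (1+a)/4$ for $n$ odd and $d_n = -a/4$ for $n$ even. The recursion \eqref{eq:recurrence_C} then reads
\begin{align*}
C_n = d_n + C_{n-1} + \sqrt{d_n^2 + C_{n-1}/2}\,.
\end{align*}
The main point is that in each case the radicand is a perfect square, so the square root can be removed explicitly.

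\textbf{Base case.} I initialize with $C_0 = 0$: the cost of requiring zero hitting times vanishes, and this is consistent with Proposition \ref{propo:global_hitting_prob}. Since $a > -1$ gives $d_1 = (1+a)/4 > 0$, we get $C_1 = d_1 + |d_1| = (1+a)/2$. This matches the claimed value $1\cdot(a+1)/2$ for $n=1$.

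\textbf{Step $2n-1 \to 2n$.} Assume $C_{2n-1} = n(a+n)/2$ and use $d_{2n} = -a/4$. Then
\begin{align*}
d_{2n}^2 + \frac{C_{2n-1}}{2} = \frac{a^2 + 4na + 4n^2}{16} = \frac{(a+2n)^2}{16}\,.
\end{align*}
Since $a > -1$ and $n \geq 1$, we have $a + 2n > 0$, so the square root equals $(a+2n)/4$. Therefore
\begin{align*}
C_{2n} = -\frac{a}{4} + \frac{n(a+n)}{2} + \frac{a+2n}{4} = \frac{n}{2} + \frac{n(a+n)}{2} = \frac{n(a+n+1)}{2}\,.
\end{align*}

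\textbf{Step $2n \to 2n+1$.} Assume $C_{2n} = n(a+n+1)/2$ and use $d_{2n+1} = (1+a)/4$. Then
\begin{align*}
d_{2n+1}^2 + \frac{C_{2n}}{2} = \frac{(a+1)^2 + 4n(a+1) + 4n^2}{16} = \frac{(a+1+2n)^2}{16}\,.
\end{align*}
The square root equals $(a+1+2n)/4$, which is positive. Therefore
\begin{align*}
C_{2n+1} = \frac{a+1+n}{2} + \frac{n(a+n+1)}{2} = \frac{(n+1)(a+n+1)}{2}\,,
\end{align*}
which is the claimed formula for $C_{2(n+1)-1}$. This closes the induction.

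\textbf{Main obstacle.} The only delicate point is not the algebra but checking that \eqref{eq:recurrence_C} really is the minimum of the variational problem at each step. First, the positive root must be the relevant one. Second, the minimizer $t = \sqrt{A/B}$ must be an admissible time, strictly positive and inside the range where the estimates of Proposition \ref{propo:localhitting} apply. Here $A$ and $B$ are the coefficients that appear after expanding $(1 + d_n t)^2/(2t) + C_{n-1}(1 + t/4)$, namely $A = 1/2$ and $B = d_n^2/2 + C_{n-1}/4$. Both are positive as soon as $C_{n-1} > 0$ or $d_n \neq 0$, which holds for all $n \geq 1$ once the base case is settled. So I would state this verification briefly before running the induction.
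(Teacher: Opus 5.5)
Your proof is correct and is the paper's argument: a two-step induction from $C_0=0$ in which the radicands $a^2/16+C_{2n-1}/2$ and $(a+1)^2/16+C_{2n}/2$ are recognized as the perfect squares $(a+2n)^2/16$ and $(a+2n+1)^2/16$. Your explicit check that $a+2n>0$, which justifies taking the positive root, is a detail the paper leaves implicit; your closing ``main obstacle'' paragraph is about how \eqref{eq:recurrence_C} arises from the variational problem, which this lemma does not require, and there you only show $A,B>0$ rather than that the minimizer $t=1/\sqrt{d_n^2+C_{n-1}/2}$ lies below $t^\ast=1/|d_n|$ (it does, since $C_{n-1}>0$ for $n\ge 2$).
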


\begin{proof}
	We proceed by induction on $n$. We already know that $C_0 = 0$ and $C_1 = (a+1)/2$ from Proposition \ref{propo:global_hitting_prob}. 
Assume the formula holds for $2k-1$, then for the even step $2k$, the term inside the square root of \eqref{eq:recurrence_C} becomes:
	\begin{align*}
		\frac{a^2}{16} + \frac{C_{2k-1}}{2} 
		= \frac{(a+2k)^2}{16}\,.
	\end{align*}
	Substituting this back into the recurrence gives:
	\begin{align*}
		C_{2k} 
		= \frac{k(a+k+1)}{2}\,.
	\end{align*}
	Similarly, for the odd step $2k+1$, using $C_{2k} = \frac{k(a+k+1)}{2}$,
	\begin{align*}
		C_{2k+1} &= \frac{a+1}{4} + C_{2k} + \sqrt{\frac{(a+1)^2}{16} + \frac{C_{2k}}{2}}\,.
	\end{align*}
	We focus on the term inside the square root:
	\begin{align*}
		\frac{(a+1)^2}{16} + \frac{k(a+k+1)}{4} 
		&= \frac{(a + 2k + 1)^2}{16}\,.
	\end{align*}
	Taking the square root and adding the remaining terms yields:
	\begin{align*}
		C_{2k+1} =  \frac{(k+1)(a+k+1)}{2}\,.
	\end{align*}
	This concludes the induction.
\end{proof}

To conclude the proof of Proposition \ref{propo:asympkexplo}, we recall that observing $k$ points above the level $\mu$ strictly requires completing $2k-1$ alternating phases of the diffusions (starting and ending with the $-a/4$ drift phase hitting the critical line). Therefore, the asymptotic probability is precisely governed by $C_{2k-1}$:
\begin{align*}
	\lim_{\mu \to \infty} \frac{1}{\mu} \ln \P[\mathcal{M}_a[\mu,\infty) \geq k] = - C_{2k-1} = - \frac{k(a+k)}{2}\,.
\end{align*}

\subsection{Extension to $a \in (-1,0)$}
We briefly detail how the previous analysis naturally extends to the regime $a \in (-1,0)$. In this regime, recall that the drift of the $+$ phase is $a_2 = (a+1)/4 < 1/4$, meaning both the $-$ and $+$ phases are strictly slower than the critical line. 

Because the final $+$ phase no longer hits the critical line almost surely, observing $k$ points above the level $\mu$ requires evaluating the large deviation cost of all $2k$ phases. Let $\tilde{C}_n$ denote the optimal cost of a trajectory with $n$ phases ending in the $+$ phase. 

The sequence satisfies the exact same recurrence as \eqref{eq:recurrence_C}, but with shifted parity: the final phase is a $+$ phase, so $\tilde{a}_n = (a+1)/4$ for odd $n$, and $-a/4$ for even $n$. For $n=1$, substituting $\tilde{a}_1 = (a+1)/4$ and $\tilde{C}_0 = 0$ into the recurrence yields:
\begin{align*}
	\tilde{C}_1 = -\frac{a}{4} + \sqrt{\frac{a^2}{16}} = -\frac{a}{4} + \frac{|a|}{4} = -\frac{a}{2}\,,
\end{align*}
where we used the fact that $a < 0$. 

Iterating the recurrence \eqref{eq:recurrence_C}, an induction similar to Lemma \ref{lem:explicit_costs} yields the exact closed-form sequence for $n=2k$:
\begin{align*}
	\tilde{C}_{2k} = \frac{k^2 - ka}{2} = \frac{k(k + |a|)}{2}\,.
\end{align*}

Thus, the asymptotic probability is governed by:
\begin{align*}
	\lim_{\mu \to \infty} \frac{1}{\mu} \ln \P[\mathcal{M}_a[\mu,\infty) \geq k] = - \frac{k(k+|a|)}{2}\,.
\end{align*}
Notice that substituting $a \ge 0$ into our previous result $C_{2k-1} = k(k+a)/2$ gives the expression $k(k+|a|)/2$.

	\bibliographystyle{plain}
\bibliography{biblio1}

\end{document}